\newcommand{\E}{\textit{e}}
\renewcommand{\thmcontinues}[1]{continued}
\newcommand*{\reals}{\mathbb{R}}
\newcommand*{\nats}{\mathbb{N}}
\newcommand*{\inner}[2]{\langle #1, #2\rangle}
\newcommand*{\indicator}[1]{\mathbbm{1}\left\{#1\right\}}
\newcommand*{\bracks}[1]{\left\{#1\right\}}
\newcommand*{\sqbrack}[1]{\left[#1\right]}
\newcommand*{\paren}[1]{\left(#1\right)}
\newcommand*{\abs}[1]{\left|#1\right|}
\newcommand*{\norm}[1]{\left\lVert#1\right\rVert}
\newcommand*{\rme}{\mathrm{e}}
\newcommand*{\rmd}{\mathrm{d}}
\newcommand*{\KL}{\mathrm{KL}}
\newcommand*{\etr}{\mathrm{etr}}
\newcommand*{\GL}{\mathrm{GL}}
\newcommand*{\LTP}{\mathrm{LT}^+}
\newtheorem{theorem}{Theorem}
\newtheorem{corollary}[theorem]{Corollary}
\newtheorem{proposition}[theorem]{Proposition}
\theoremstyle{remark}
\newtheorem{remark}{Remark}
\theoremstyle{definition}
\theoremstyle{remark} 
\theoremstyle{remark}
\newtheorem{example}{Example}
\theoremstyle{remark}
\newtheorem{lemma}{Lemma}
\theoremstyle{definition}
\theoremstyle{remark}
\newtheorem{assumption}{Assumption}
\begin{document}

\begin{frontmatter}
  \title{E-statistics, group invariance and anytime-valid testing}

  \runtitle{E-statistics, group invariance and anytime-valid testing}

\begin{aug}
\author[A]{\fnms{Muriel Felipe}~\snm{Pérez-Ortiz}\ead[label=e1]{m.f.perez.ortiz@tue.nl}},
\author[B]{\fnms{Tyron}~\snm{Lardy}\ead[label=e2]{t.d.lardy@math.leidenuniv.nl}}
\author[C]{\fnms{Rianne}~\snm{de Heide}\ead[label=e3]{r.de.heide@vu.nl}}
\and
\author[D]{\fnms{Peter D.}~\snm{Grünwald}\ead[label=e4]{pdg@cwi.nl}}
\address[A]{Eindhoven University of Technology, Eindhoven, The Netherlands\printead[presep={,\
  }]{e1}}
\address[B]{Leiden University, Leiden, The Netherlands\printead[presep={,\
  }]{e2}}
\address[C]{Vrije Universiteit, Amsterdam, The Netherlands\printead[presep={,\ }]{e3}}
\address[D]{Centrum Wiskunde \& Informatica, Amsterdam, The Netherlands\printead[presep={,\ }]{e4}}
\end{aug}

\begin{abstract}
  We study worst-case-growth-rate-optimal (GROW) $\E$-statistics for hypothesis
  testing between two group models. 
It is known that under a mild condition on the action of the underlying group $G$ on the data,
there exists a maximally invariant statistic. We show that among all $\E$-statistics, invariant or not, the
  likelihood ratio of the maximally invariant statistic is GROW, both in the absolute and in the relative sense, and that an
  anytime-valid test can be based on it. 
The GROW $\E$-statistic is equal to a Bayes factor with a
  right Haar prior on $G$. Our treatment avoids nonuniqueness issues that sometimes arise for such priors in Bayesian contexts. A crucial
  assumption on the group $G$ is its amenability, a well-known group-theoretical
  condition, which holds, for instance, in
  scale-location families.
Our results also apply to
  finite-dimensional linear regression.
\end{abstract}


\end{frontmatter}
\newcommand{\commentout}[1]{}

\maketitle

\section{Introduction}
\label{sec:new-introduction}
We develop \E-statistics and anytime-valid methods \citep{ramdas2023savi} for composite hypothesis testing problems where both null and alternative models remain unchanged under a group of transformations. 
Assume that the parameter of interest is a function $\delta = \delta(\theta)$ that is invariant under these transformations. 
Here, $\theta \in \Theta$ is the parameter of a probabilistic model ${\cal P} = \{\mathbf{P}_\theta:\theta\in\Theta\}$ on an observation space ${\cal X}$. In the simplest case that we address, we are interested in testing whether the invariant parameter $\delta$ takes one of two values, that is, 
\begin{equation}
  {\cal H}_0: \delta(\theta) =  \delta_0
  \text{ \ vs. \ } {\cal
    H}_1:\delta(\theta) = \delta_1.
  \label{eq:target_hypothesis_problem}
\end{equation}
A prototypical example is the one-sample t-test where $\mathcal{P}=\{N(\mu,\sigma): (\mu,\sigma)\in \mathbb{R}\times \mathbb{R}^+\}$ and the parameter of interest is the effect size $\delta(\mu,\sigma) = \mu/\sigma$, an invariant function of the model parameters under changes of scale. 
Other examples include tests about the correlation coefficient, which is invariant under affine transformations, and the variance of the principal components, an invariant under rotations around the origin  \citep[for more examples, see][]{berger1998bayes}.
Data can be reduced by only considering its invariant component. Roughly speaking, by replacing the data $X^n = (X_1, \dots, X_n)$ with an invariant statistic $M_n = m_n(X^n)$, one discards all information that is not relevant to the parameter $\delta$ (see the formal definitions in Section~\ref{sec:preparation}). For example, for the one-sample t-test, we can set $M_n$ equal to the t-statistic $M_{\mathcal{S},n} \propto \hat\mu_n/\hat\sigma_n$ but also to $M_n = \paren{X_1/\abs{X_1}, \dots, X_n / \abs{X_1}}$. Both are invariant functions under rescaling of all data points by the same factor that retain, as we will see, as much information as possible about the data.  

By reducing the data through an invariant function, an invariant test can be obtained. Through the lens of the invariance-reduced data $M_n$, the composite hypotheses about $\theta$ simplify and  \eqref{eq:target_hypothesis_problem} becomes simple-vs.-simple in terms of $\delta$. 
Indeed, because it is an invariant function, the density of $M_n$ depends only on $\delta$. Let us denote  $p^{M_n}$ and  $q^{M_n}$ the densities of $M_n$ under $\mathcal{H}_0$ and $\mathcal{H}_1$, respectively.
Both fixed-sample-size and sequential tests can be based on assessing the value of the likelihood ratio
\begin{equation}\label{eq:lr}
T^{M_n}  := \frac{q^{M_n}(m_n(X^n))}{p^{M_n}(m_n(X^n))}.
\end{equation} 
However, it is not a priori clear whether this reduction affects the optimality of the resulting tests. In other words, does the family of invariant tests, i.e. tests that can be written as a function of (\ref{eq:lr}), contain the best ones?

For fixed-sample size tests, with power as a criterion, the answer is positive: 
a celebrated theorem of Hunt and Stein~\cite[Section~8.5]{lehmann_testing_2005} shows that, when looking for a test that has max-min power, no loss is incurred by looking only among group-invariant tests.  In classical sequential testing, the principle of invariance has been used \citep{cox_sequential_1952,hall_relationship_1965}, but no optimality results are known. In this article we address this question and provide an analogue of the Hunt-Stein theorem  
within the setting of {\em anytime-valid\/} tests. 
We replace power by GROW (see again below), the natural optimality criterion in this context, and we show that, under some regularity conditions, $T^{M_n}$ is the {\em optimal $\E$-statistic\/}
for testing  \eqref{eq:target_hypothesis_problem}. 

The \E-statistic (also known as \E-variable or \E-value) is a central concept within the theory of \emph{anytime-valid} testing~\citep{vovk_e-values_2021,shafer_language_2019,grunwald_safe_2023,ramdas_admissible_2020}, interest in which has recently exploded ---\cite{ramdas2023savi} provide a comprehensive overview.
The main objective that is achieved by testing with $\E$-statistics is finite-sample type-I error control in two common situations: when experiments are optionally stopped---sampling is stopped at a data-dependent sample size---, and when aggregating the evidence of interdependent experiments. In the latter case, called optional continuation \citep[][\citetalias{grunwald_safe_2023} from now on]{grunwald_safe_2023}, the decision to start a new experiment may depend in unknowable ways on the outcome of previous experiments \citep{vovk_e-values_2021}. 
We will use the qualifier anytime-valid as an umbrella term that covers both optional stopping and continuation, and study invariance reductions for anytime-valid tests; we stress that, as elaborated in Appendix~\ref{app:av}, anytime-valid testing, while taking place in a sequential setting, is different from classical, Wald-style sequential testing, in which power {\em is\/} meaningful. While \E-statistics have also found applications beyond anytime-validity, for example in multiple testing \citep{WangR22,RenB22} and when not just the stopping time but also the relevant  loss function or significance level  may depend in unknowable ways on the data itself (decision-theoretic robustness, \cite{Grunwald23}), our results focus on optimality in the anytime-valid context. In this context, power is not a meaningful measure of optimality (see Section~\ref{sec:optimality}).
A natural replacement of power is the \emph{GROW} criterion, which stands for {\em growth rate optimal in the worst case}. 
Informally, among all \E-statistics, those that are GROW accumulate evidence against the null as fast as possible (in terms of sample size). Some other authors refer to GROW as `maximal  \E-power' \citep{zhang2023exact} or as `optimizing  the Kelly criterion' \citep{ramdas2023savi}. 
Sometimes, it is beneficial to consider instead the growth rate relative to an oracle that knows the distribution of the data, not in absolute terms. \E-statistics that are optimal in this relative sense are called relatively GROW.
Especially this relative criterion (or closely related variations of it) has often been used to design $\E$-statistics; recent examples include  \citep{henzi2023safe,waudby2020estimating}; see \cite{ramdas2023savi} for a more comprehensive list. 

Under regularity conditions, a GROW \E-statistic can be found by minimizing the Kullback-Leibler (KL) divergence between the convex hull of the null and alternative models (\citetalias{grunwald_safe_2023}). 
Indeed, the likelihood ratio of the distributions that achieve this minimum $\KL$ is a GROW \E-statistic. 
As such, \E-statistics can be seen as composite generalizations of likelihood ratios. 
In particular, any likelihood ratio of a statistic that has the same distribution under all elements of the null and another single distribution under the alternative is an $\E$-statistic (\citetalias{grunwald_safe_2023}). 
As a consequence, for any invariant function of the data $M_n$, the likelihood ratio statistic $T^{M_n}$ from \eqref{eq:lr} is an $\E$-statistic for the testing problem \eqref{eq:target_hypothesis_problem}. 
As our main contribution, we show that, under regularity conditions, if $M_n$ is a maximally invariant statistic of the data or of a sufficient statistic for $\theta$, then the $\KL$ divergence between $q^{M_n}$ and $p^{M_n}$ equals the minimum $\KL$ divergence between the convex hulls of the null and alternative models. By the result of \citetalias{grunwald_safe_2023} mentioned above that links $\KL$ minimization to GROW \E-statistics, $T^{M_n}$ is  GROW. A maximally invariant statistic, informally, loses as little information as possible about the data while being invariant. 
For example, 
with $V_n = (X_1/|X_1|, \dots, X_n/|X_1|)$, 
setting $M_n := V_n$ as in the beginning of the introduction for the t-test gives a maximal invariant, while using $M_n' := V_{n-1}$ gives an invariant that is not maximal. 
Furthermore, the t-statistic is not maximally invariant for the raw data, but it is a maximally invariant function of $(\hat\mu_n,\hat\sigma_n)$ which is a sufficient statistic. Consequently, the likelihood ratio statistic $T^{M_{\mathcal{S},n}}$, where   $M_{\mathcal{S},n}$ is the t-statistic, and $T^{M_n}$ with $M_n = V_n$ coincide and are both GROW.

Additionally, we show that the GROW \E-statistic coincides with the relatively GROW \E-statistic in the group-invariant setting. Hence, $T^{M_n}$ is relatively GROW as well. 
This growth rate optimality motivates the use of $T^{M_n}$ in optional continuation settings. 
As a further contribution, we show that every time that $M_n$ is a maximal invariant the sequence $T= (T^{M_n})_{n \in {\mathbb N}}$ is a nonnegative martingale. 
This extends its use and optimality to optional stopping.

The rest of this article is organized as follows. 
In Section~\ref{sec:preparation} we introduce notation, formally lay the groundwork for group-invariant testing, review \E-statistics and their optimality criteria, and discuss related work. 
Section~\ref{sec:main} is devoted to stating our main results:
showing that the \E-statistic $T^{M_n}$ for a maximally invariant function $M_n=m_n(X^n)$ is both GROW and relatively GROW, proving that $T^{M_n}$ is suited for both optional continuation and optional stopping, and extending these results to composite hypotheses, 
i.e. sets $\Delta_1$ and $\Delta_0$ of $\delta$'s, both with and without a prior distribution imposed on them (for general discussion on how to choose $\delta_j,\Delta_j$ or such priors, we refer to \citetalias[Section 6]{grunwald_safe_2023}).
Next, in Section~\ref{sec:test-mult-norm}, we apply our results to two examples. 
We end this article with Section~\ref{sec:discussion}, where we provide additional discussion about the technical conditions that our results require and about related work on group-invariant testing; and Section~\ref{sec:proofs}, where we give all proofs that were omitted earlier. 

\section{Preparation for the Main Results}\label{sec:preparation} 
This section is structured as follows. We first introduce notation. 
Then, in section~\ref{sec:intro-group-invariance}, we introduce the formal setup and our running example, the t-test. 
In Section~\ref{sec:intro-e-statistics}, we define $\E$-statistics, our main objects of study, and in Section~\ref{sec:optimality} we define our optimality criteria.
Finally, Section~\ref{sec:previous_work} highlights previous work.


\subsection{Notation}\label{sec:notation}   
We write $X$ for a random variable taking values in the observation space $\mathcal{X}$, endowed with a measurable structure, and $X^n:=(X_1,\dots,X_n)$ for $n$ independent copies of $X$ under the distributions that are to be considered. Statistics of the data are denoted as $T=t(X^n)$, where $t$ is implicitly assumed to be a measurable function.
We use letters $\mathbf{P}$ and $\mathbf{Q}$ to refer to distributions of $X$.
For a statistic $T = t(X^n)$, we write $\mathbf{P}^T$ for the image measure of $\mathbf{P}$
under $t$, that is, $\mathbf{P}^T\{T\in B\} = \mathbf{P}\{t(X^n)\in B\}$. 
When writing conditional expectations, we write $\mathbf{E}^{\mathbf{P}}[f(X) | Y]$,
and $\mathbf{P}^{X| y}$ for the conditional distribution of $X$ given
$Y=y$. 
We only deal with situations where such conditional distributions exist.
If we are considering a set of distributions parameterized in terms of a parameter space $\Theta$, we write $\mathbf{E}^\mathbf{P}_\theta[f(X)]$ rather than $\mathbf{E}^{\mathbf{P}_\theta}[f(X)]$ for the sake of readability. 
Furthermore, for a prior distribution $\mathbf{\Pi}$ on $\Theta$, we write $\mathbf{\Pi}^\theta\mathbf{P}_\theta$
for the marginal distribution that assigns probability
$\mathbf{\Pi}^\theta \mathbf{P}_\theta\{X\in B\} = \int
\mathbf{P}_{\theta}\{X\in B\}\rmd\mathbf{\Pi}(\theta)$ to any measurable set
$B$. For the posterior distribution of $\theta$ given $X$ we write
$\mathbf{\Pi}^{\theta | X}$. 
The Kullback-Leibler (KL) divergence between $\mathbf{Q}$ and $\mathbf{P}$ is denoted by  $\KL(\mathbf{Q}, \mathbf{P}) = \mathbf{E}^\mathbf{Q}[\ln( \mathrm{d}Q / \mathrm{d}P)]$~\citep{kullback_information_1951}.
Given two subsets $H,K$ of a group $G$ we write
$HK = \{hk : h\in H, k\in K\}$ for the set of all products between
elements of $H$ and elements of $K$. Similarly, for $g\in G$ and $K\subseteq G$, we write $gK = \{gk : k\in K\}$ for the translation of $K$ by
$g$, and $K^{-1} = \{k^{-1} : k\in K\}$ for the set of inverses of $K$. 
We say that $K$ is symmetric if $K = K^{-1}$.
If $G$ acts on $\mathcal{X}$, then we denote the action of $G$ on ${\cal X}$ by $(g,x)\mapsto gx$ for $g\in G$ and $x\in {\cal X}$, and extend the action to ${\cal X}^n$ component-wise; that is, $(g, x^n)\mapsto gX^n := (gx_1, \dots, gx_n)$ for $g\in G$ and $x^n \in {\cal X}^n$.
We write $gB=\{gb:b\in B\}$ for the left translate of a subset $B\subseteq \mathcal{X}$ by $g$.
If $G$ acts on $\Theta$, the notation is completely analogous.

\subsection{Group invariance}
\label{sec:intro-group-invariance}
We consider a group $G$ that acts freely on both the observation space $\mathcal{X}$ and the parameter space $\Theta$.
Recall that $G$ acts freely on a set ${\cal Z}$ if anytime that $gz=z$ for some $g \in G$ and $z \in \mathcal{Z}$, then $g$ is the identity element of the group $G$. 
A probabilistic model ${\cal P} = \{\mathbf{P}_\theta:\theta\in\Theta\}$ on ${\cal X}$ is said to be invariant under the action of $G$ if the distribution $\mathbf{P}_\theta$ satisfies
\begin{equation}\label{eq:def_invariant_model}
  \mathbf{P}_{\theta}\{X\in B\} = \mathbf{P}_{g\theta}\{X\in gB\}
\end{equation}
for any $g\in G$, measurable $B\subseteq {\cal X}$, and $\theta\in\Theta$.
Furthermore, a function $m(x)$ is said to be invariant under the action of $G$ if $m(gx)=m(x)$ for all $x\in \mathcal{X}$ and $g\in G$; in other words, $m$ is constant 
on
the orbits of $G$.
Moreover, $m$ is said to be maximally invariant if it indexes the orbits of $\mathcal{X}$ under the action of $G$; that is, $m(x)=m(x')$ for $x,x'\in \mathcal{X}$ if and only if there exists a $g\in G$ such that $x=gx'$. 
A statistic is called (maximally) invariant if the corresponding function is.
These definitions are 
completely analogous for functions defined on $\Theta$.
In particular, we study situations where the parameter of interest $\delta = \delta(\theta)$ is a maximally invariant function of the parameter $\theta$. 
We then say that $\delta$ is a maximally invariant parameter.

We now reparametrize the problem
described in (\ref{eq:target_hypothesis_problem}) using the group $G$. Using that the action of the
group on the parameter space is free, we can reparametrize each orbit in
$\Theta/G$ with $G$. Indeed, we can pick an arbitrary but fixed element in the
orbit $\theta_0\in \delta^{-1}(\delta_0)$ and, for any other element $\theta\in \delta^{-1}(\delta_0)$, we can identify $\theta$ with the group element $g(\theta)\in G$ that transports $\theta_0$ to $\theta$, that is, such that $g(\theta)\theta_0 = \theta$. 
Hence, with a slight abuse of notation, we can identify $\theta\in\delta^{-1}(\delta_0)$ with $g = g(\theta)\in G$ and identify $\mathbf{P}_{\theta} = \mathbf{P}_{g(\theta)\theta_0}$ with $\mathbf{P}_g$.
The same identification can be carried out in the alternative model by an analogous choice of $\theta_1\in \delta^{-1}(\delta_1)$. 
The starting problem
(\ref{eq:target_hypothesis_problem}) may now be rewritten in the form
\begin{equation}\label{eq:main_problem_group_parametrization}
  {\cal H}_0: X^n\sim \mathbf{P}_{g}, \  g\in G
  \text{, vs. }
  {\cal H}_1: X^n\sim \mathbf{Q}_{g}, \ g\in G. 
\end{equation}
To make notation more succinct, we use ${\cal Q} = \{\mathbf{Q}_{g}\}_{g\in G}$ to denote the alternative hypothesis and
${\cal P} = \{\mathbf{P}_{g}\}_{g\in G}$ for the null. 
As will follow from our discussion, our results are insensitive to the choices of $\theta_0\in \delta^{-1}(\delta_0)$ and $\theta_1\in\delta^{-1}(\delta_1)$. 

As mentioned in the introduction, tests for~\eqref{eq:main_problem_group_parametrization} are classically based on the likelihood ratio  $T^{M_n}$ of a maximally invariant statistic $M_n=m_n(X^n)$, as in~\eqref{eq:lr}.
While the distribution of $M_n$ might be unknown, it is well-known that its likelihood ratio can be computed by integration over the group $G$ whenever the following hold:
(1) the action is continuous and proper, 
(2) $G$ is a $\sigma$-compact locally compact  topological group, and 
(3) for all $g$, $\mathbf{P}_g$ and $\mathbf{Q}_g$ are dominated by a relatively left invariant measure $\nu$. 
In (1), an action is proper if the map $G\times \mathcal{X}^n\to\mathcal{X}^n\times \mathcal{X}^n$ defined by $(g,x^n)\mapsto (gx^n,x^n)$ is proper, that is, the inverse of any compact set is compact. 
In (2), a topological group is a group equipped with a topology, such that the group operation, seen as a function $G\times G\to G$, is continuous. 
Furthermore, since $G$ is assumed to be locally compact, there exists a measure $\rho$ on $G$ that is right invariant~\citep[see][VII,§1,n\textsuperscript{o} 2]{bourbaki_integration_2004}.
This means that for any $g\in G$ and any $B\subseteq G$ that is measurable, it holds that $\rho\{Bg\} = \rho\{B\}$. 
The measure $\rho$ is called the Haar measure, it is unique up to a multiplicative factor, and it is finite if and only if $G$ is compact. 
Using disintegration-of-measure results from \citet[][VIII.27]{bourbaki_integration_2004}, \citet{andersson_distributions_1982} shows that $T^{M_n}$ can be
computed as
\begin{equation}\label{eq:papi_wijsman}
  T^{M_n}
  =
  \frac{q^{M_n}(m_n(X^n))}{p^{M_n}(m_n(X^n))}
  =
  \frac{
    \int_Gq_{g}(X^n)\rmd\rho(g)
  }{
    \int_Gp_{g}(X^n)\rmd\rho(g )
  },
\end{equation}
where $p_g$ and $q_g$ denote the densities of $\mathbf{P}_g$ and $\mathbf{Q}_g$ respectively.
This is known as Wijsman's representation theorem \citep[for extended statement and discussion, see ][Theorem
5.9]{eaton_group_1989}.
Note that \eqref{eq:papi_wijsman} implies that the likelihood ratio $T^{M_n}$ is independent of the choice of maximal invariant $M_n$.
Remarkably, work by Stein, reported by~\cite{hall_relationship_1965}, shows that it does not even matter whether we consider a maximal invariant of the original data, or whether we first reduce the data through sufficiency and then consider a maximal invariant of the sufficient statistic.
In the t-test example, this shows that the likelihood ratio of the t-statistic is equal to that of $M_n$ as in the start of the introduction. We further discuss this result in Appendix~\ref{app:sufficiency}.

Finally, the classical theorem of Hunt and Stein \cite[][Section~8.5]{lehmann_testing_2005} shows that, under some regularity conditions, when looking for a test that is max-min optimal in the sense of power, it is sufficient to look among invariant tests, i.e. tests that can be written as a function of $T^{M_n}$ as in (\ref{eq:lr}). One of the crucial assumptions underlying their result is the \emph{amenability} of $G$.
A group $G$ is amenable if there exists a sequence of almost-right-invariant probability distributions, that is, a sequence $\mathbf{\Pi}_1,\mathbf{\Pi}_2,\dots$ such that, for any measurable set $B\subseteq G$ and $g\in G$  
\begin{equation*}
    \lim_{k\to\infty}
    \abs{
      \mathbf{\Pi}_k\bracks{H\in B}
      -
      \mathbf{\Pi}_k\bracks{H \in Bg}
    }
    =
    0.
\end{equation*}
Amenable groups have been thoroughly studied \citep{paterson_amenability_2000}
and include, among others, all finite, compact, commutative, and solvable
groups. 
The easiest example of a nonamenable group is the free group in two elements and
any group containing it. 
Another prominent example of a nonamenable group is that of
invertible $d\times d$ matrices with matrix multiplication.

\begin{example}[t-test under Gaussian assumptions]\label{ex:t-test}
  Consider an i.i.d.\ sample $X^n = (X_1,\dots,X_n)$ of size $n\in\nats$ from an unknown
  Gaussian distribution $N(\mu,\sigma)$, with $\mu\in \reals$ and $\sigma \in \reals^+$. 
  In the 1-sample t-test, we are interested in testing whether $\mu/\sigma = \delta_0$ or $\mu/\sigma = \delta_1$ for some $\delta_0,\delta_1\in\reals$. 
  For $c\in \mathbb{R}^+$, we have that $cX \sim N(c\mu,c\sigma)$, so it follows that the Gaussian model is invariant under scale transformations.
  The corresponding group is $G = (\reals^+, \ \cdot\ )$, which acts on $\mathcal{X}^n$ by component-wise multiplication and on $\Theta$ by $(c, (\mu,\sigma))\mapsto (c\mu,c\sigma)$ for each $c\in G$ and $(\mu,\sigma)\in \Theta$. 
  The parameter of interest, $\delta = \mu/\sigma$, is scale-invariant and indexes the orbits of the action of $G$ on $\Theta$. 
  A maximally invariant statistic is $M_n := 
   \paren{X_1/\abs{X_1}, \dots, X_n / \abs{X_1}}$.
  The right Haar measure $\rho$ on $G$ is given by
  $\rmd \rho (\sigma)= \rmd \sigma / \sigma$, so that the likelihood ratio of $M_n$ can be expressed, as in \eqref{eq:papi_wijsman}, by
  \begin{equation}\label{eq:ttesthaar}
    T^{M_n} =
    \frac{
      \int_{\sigma > 0}
      \frac{1}{\sigma^{n}}
      \exp
      \left(
        - \frac{n}{2}
        \left[
          \left(
            \frac{\bar{X}_n}{\sigma} - \delta_1
          \right)^2
          +
            \frac{1}{n} \sum_{i=1}^n \paren{\frac{X_i - \bar{X} }{\sigma}}^2
        \right]
      \right)  \frac{\rmd \sigma}{\sigma}
    }
    {
      \int_{\sigma > 0} \frac{1}{\sigma^{n}}
      \exp \left(- \frac{n}{2}
        \left[
          \left( \frac{\bar{X}_n}{\sigma} - \delta_0
          \right)^2
          +
            \frac{1}{n} \sum_{i=1}^n \paren{\frac{X_i - \bar{X} }{\sigma}}^2
        \right]
        \right)
        \frac{\rmd \sigma}{\sigma}
      },
  \end{equation}
  where $\bar{X}_n := \frac{1}{n}\sum_{i=1}^n X_i$.
  The results by Stein, discussed in Appendix~\ref{app:sufficiency}, show that the likelihood ratio of the t-statistic, i.e. $M_{\mathcal{S},n}\propto \hat\mu_n/\hat\sigma_n$, is equal to the expression obtained in \eqref{eq:ttesthaar}.
\end{example}

\subsection{The family of \E-statistics, and optional continuation and stopping}
\label{sec:intro-e-statistics}
We now define  $\E$-statistics, our measure of evidence against the null hypothesis. 
The family of $\E$-statistics comprises all nonnegative real statistics whose expected value is bounded by one under all elements of the null, that is, all statistics $T_n=t_n(X^n)$ such that $T_n \geq 0$ and
\begin{equation} 
  \label{eq:def_evalue_intro}
  \sup_{g\in G}\mathbf{E}^{\mathbf{P}}_g[T_n]\leq 1.
\end{equation}
An example of an \E-statistic is the likelihood ratio statistic in any simple-vs-simple testing problem (see e.g. \citetalias[Section 1]{grunwald_safe_2023} or \citet{ramdas2023savi}). In particular, \eqref{eq:lr} is an $\E$-statistic for the hypotheses in~\eqref{eq:main_problem_group_parametrization}. 
\E-statistics are appropriate in optional continuation contexts because of the following two properties that  are consequences of \eqref{eq:def_evalue_intro}.
\begin{enumerate}
\item The type-I error of the test that rejects the null hypothesis anytime that
  $T_n\geq 1 / \alpha$ is smaller than $\alpha$, a consequence of
 \eqref{eq:def_evalue_intro} and Markov's inequality.
\item Suppose that $X^n$ and $X^m$ are the independent outcomes of two subsequent experiments.
Let $T_n=t_n(X^n)$ be an $\E$-statistic for $X^n$ and let $\{T_{m,\varphi}: \varphi\in \Phi\}$ be a family of $\E$-statistics for $X^m$ indexed by some set $\Phi$. 
Suppose further that, after observing the first sample $X^n$, the specific $T_{m,\varphi}$ used to measure evidence for
the second sample is chosen as a function of $X^n$, that is, we use $T_{m,\hat\varphi}$ where $\hat\varphi = \hat\varphi(X^n)$ is some function of $X^n$.
Then
  $T_{n + m}:= T_nT_{m,\hat\varphi}$ is also an $\E$-statistic, irrespective of the definition of $\hat\varphi$.
In particular, this includes the scenario where we only continue to the second experiment if a certain outcome is observed in the first one. Indeed, $\Phi$ may contain a special value ${\bf 1}$ so that $t_m(X^m; {\bf 1})= 1$ is constant, irrespective of $X^m$. Then, $T_{n+m}= T_n$ every time that  $\hat\varphi = {\bf 1}$. 
\end{enumerate}
Together, these two properties imply that the test that rejects the null if $T_{n+m} \geq 1/\alpha$ has type-I error bounded by $\alpha$, no matter the definition of $\hat\varphi$.
Such type-I error guarantees are essentially impossible using p-values (\citetalias[Section 1.3]{grunwald_safe_2023}). 
Some---not all---types of $\E$-statistics can additionally be used in two related settings: (a) \emph{optional stopping}, when there is a single sequence of data $X_1, X_2, \ldots$
and we want to do a test with type-I error guarantees based on all data seen so far, irrespective of
when we stop; and (b) optional continuation as in 2. 
above, but with individual $\E$-statistics whose sample size is itself not fixed but determined by some stopping rule. 
As is well-known, for both (a) and (b) it is sufficient that $(T_n)_{n \in {\mathbb N}}$ is a nonnegative martingale with respect to some filtration $\mathcal{F}$ \citep[see e.g.][or \citetalias{grunwald_safe_2023}]{ramdas2023savi}.
The first part follows from Ville's inequality for nonnegative martingales: the probability that there will {\em ever\/} be a sample size $n$ at which $T_n \geq 1/\alpha$ is bounded by $\alpha$. 
We thus have type-I error control under optional stopping, which takes care of (a) above. 
The optional stopping theorem implies that for every stopping time $\tau$ adapted to $\mathcal{F}$, $T_{\tau}$ is also an $\E$-statistic, taking care of (b). 
For completeness, we provide more details in Appendix~\ref{app:av}, including a subtlety regarding (b): while they seem unlikely to arise in practice, there do exist stopping times $\tau'$ relative to the data that are not stopping times relative to ${\cal F}$. We show an example where $T_{\tau'}$ is not an \E-statistic and (b) breaks. 

\subsection{Optimality criteria for \E-statistics}
\label{sec:optimality}
The standard optimality criterion for hypothesis tests satisfying a certain type-I
error guarantee is worst-case
power maximization for a  fixed-sample-size or, with classic sequential tests, for a fixed stopping rule. 
This criterion cannot be used when the stopping rule is unknown because knowledge of the stopping rule is required by the definition of
power. 
Additionally, an $\E$-statistic that optimizes power at fixed stopping time will take the value zero with positive probability, making it useless for optional continuation by multiplication. 
A more sensible criterion for \E-statistics under optional continuation is growth rate optimality in the worst case (\citetalias{grunwald_safe_2023}). Should it exist, an
$\E$-statistic $T^*_n$ is GROW if it maximizes the worst-case expected
logarithmic value under the alternative hypothesis, that is, if it maximizes
\begin{equation} \label{eq:grow_objective}
  T_n
  \mapsto
  \inf_{g\in G}\mathbf{E}^{\mathbf{Q}}_g[\ln T_n]
\end{equation}
over all $\E$-statistics. 
The following theorem, 
stated in our notation
for group-invariant problems, shows that in most cases the GROW \E-statistic takes the form of a particular Bayes factor.

\begin{theorem}[\citetalias{grunwald_safe_2023} Theorem~1 in Section~4.3]
  \label{theo:ghk_theo1}
  Suppose that there exists a statistic $V_n=v_n(X^n)$  such that
  \begin{equation}
    \label{eq:whattominimize}
    \inf_{\mathbf{\Pi}_0, \mathbf{\Pi}_1}
    \KL(\mathbf{\Pi}_1^{g} \mathbf{Q}_{g},
    \mathbf{\Pi}_0^{g} \mathbf{P}_{g})
    =
    \min_{\mathbf{\Pi}_0, \mathbf{\Pi}_1}
    \KL(\mathbf{\Pi}_1^{g}\mathbf{Q}^{V_n}_{g}, \mathbf{\Pi}_0^{g}\mathbf{P}^{V_n}_{g})
    < \infty,
  \end{equation}
  where $\mathbf{\Pi}_0$ and $\mathbf{\Pi}_1$ are probability distributions on $G$. 
  Let $\mathbf{\Pi}^\star_0$ and $\mathbf{\Pi}^\star_1$ be probability distributions that achieve the minimum on the right hand side.
  Then 
  \begin{equation*}
    \max_{T_n \text{ $\E$-stat.}}\inf_{g\in G}\mathbf{E}^{\mathbf{Q}}_g[\ln T_n]
    = \KL(\mathbf{\Pi}^{\star g}_1\mathbf{Q}^{V_n}_{g}, \mathbf{\Pi}^{\star g}_0\mathbf{P}^{V_n}_{g}),
  \end{equation*}
and the maximum on the left is
  achieved by $T^*_n$ as given by
  \begin{equation*}
    T_n^* :=
    \frac{
      \int q^{V_n}_{g}(v_n(X^n))\rmd\mathbf{\Pi}_1^\star(g)
    }{
      \int p^{V_n}_{g}(v_n(X^n)) \rmd \mathbf{\Pi}^{\star}_0(g)
    }. 
  \end{equation*}
In other words, the $\E$-statistic $T^*_n$ is GROW for testing $\{\mathbf{P}_g\}_{g\in G}$ against $\{\mathbf{Q}_g\}_{g\in G}$. 
\end{theorem}

The statistic $V_n$ may be any measurable function, taking values in any set ${\cal V}_n$ equipped with a corresponding $\sigma$-algebra, but in all examples in our paper we can take ${\cal V}_n = \reals^m$ for some $m \leq n$. By allowing $V_n \neq X^n$, the theorem also covers cases in which the infimum on the left in (\ref{eq:whattominimize}) is not achieved. This will be the case when in the next section we apply Theorem~\ref{theo:ghk_theo1} to obtain Corollary~\ref{cor:main_corollary} 
whenever, as in the t-test example, the group $G$ is not compact. 

Given its worst-case nature, the GROW $\E$-statistic, while appropriate in some scenarios (e.g. testing exponential families with given minimum effect sizes and no nuisance parameters), is too
conservative in others (\citetalias{grunwald_safe_2023}).
\citetalias{grunwald_safe_2023} propose, for those cases, to maximize a relative form of
(\ref{eq:grow_objective}), leading to less conservative $\E$-statistics. 
We say that an $\E$-statistic $T^*_n$ is relatively GROW if
it maximizes the gain in expected logarithmic value relative to an oracle that
is given the particular distribution in the alternative hypothesis from which
data are generated, that is, if $T^*_n$ maximizes, over all $\E$-statistics,
\begin{equation}\label{eq:relative_grow_objective}
  T_n
  \mapsto
  \inf_{g\in G}
  \bracks{
    \mathbf{E}_{g}^{\mathbf{Q}}[\ln T_n]
    -
    \sup_{T'_n\text{ \E-stat.}}\mathbf{E}_{g}^{\mathbf{Q}}[\ln T'_n]
  }.
\end{equation}
As we will see and contrary to the general case, in the group-invariant
setting, any GROW $\E$-statistic is also relatively GROW. 
Hence, both criteria coincide and the differences that have been observed between them (raising the sometimes difficult question: which one to choose?) 
are not a concern for our purposes \citep{ramdas2023savi}.

\subsection{Previous and related work}
\label{sec:previous_work}
Group-invariant problems have a long tradition in statistics. They have been studied both for fixed-sample-size experiments \cite{eaton_group_1989,lehmann_testing_2005} and classical, Wald-type sequential experiments 
\citep{Rushton50,cox_sequential_1952}. 
For fixed-sample-size tests, our main result can be viewed, to some extent, as an anytime-valid analogue of the Hunt-Stein theorem. 
The proof techniques that are needed for our result are, however, distinct. 
At the core of the proof of the Hunt-Stein theorem lies the fact that the power is a linear function of the test under consideration. In its proof, an approximate symmetrization of the test is carried out using almost-right-invariant priors without affecting power guarantees. 
This line of reasoning cannot be directly translated to our setting because of the nonlinearity of the objective function that characterizes the optimal $\E$-statistics that we consider (see Section~\ref{sec:optimality}).
As for sequential tests with group invariance, most previous work (including the pioneering \cite{Rushton50,cox_sequential_1952} and in fact, as far as we could ascertain, all work pre-dating \cite{robbins1970statistical}) dealt, like Wald's original SPRT, with a priori fixed stopping rules and is not directly comparable to our anytime-valid work (see Appendix~\ref{app:av} for elaboration of this point).
Notable exceptions are the works of \cite{robbins1970statistical} and \cite{lai1976confidence}, who do consider anytime validity.
\citet{lai1976confidence} also used the expression in~\eqref{eq:ttesthaar}  for the t-test, which, in our terminology, is using the fact that it gives an \E-statistic.
However, our main concern, optimality of \E-statistics, has not been explored in this context. 

Related ideas can also be found in the Bayesian literature, where group-invariant inference with right Haar priors has been studied \citep{dawid_marginalization_1973,berger1998bayes}. 
It has been shown that, in contrast to some other improper priors, inference based on right Haar priors yields admissible procedures in a decision-theoretical sense \citep{eaton_group_2002,eaton_consistency_1999}.
However, there have also been concerns that the underlying group (and hence the right Haar prior) is not uniquely defined in some situations, and that different choices lead to different conclusions \citep{sun_objective_2007,berger_objective_2008}.
Interestingly, as we  briefly discuss in Section~\ref{sec:discussion} and at length in Appendix~\ref{app:herecomesthesun}, this issue cannot arise in our setting. 
In the same appendix, we point out similarities and the main difference to the information-theoretic work of \citet{liang2004exact}, who provide exact min-max procedures for predictive density estimation for general location and scale
families under Kullback-Leibler loss. 
In a nutshell, despite some similarities, the precise min-max result that they prove is not comparable to the results presented here.

\section{Main Results}\label{sec:main}
In this section, we state the main results of this article. 
In Section~\ref{sec:simp-invar-hypoth} we show that the likelihood ratio  $T^{M_n}$ for a maximal invariant $M_n$ is simultaneously GROW and relatively GROW.
Next, in Section~\ref{sec:anyt-valid-test}, we show that $T^{M_n}$ can be used to build an anytime-valid test.
Finally, in Section~\ref{sec:comp-invar-hypoth} we extend these results to the case that the hypotheses remain composite after reduction by invariance.

\subsection{GROW for simple invariant hypotheses}\label{sec:simp-invar-hypoth}
In order to build intuition, we first demonstrate our line of reasoning using the very special case of finite groups. 
So, assume for now that $G$ is a finite group, for instance, a group of permutations. 
Since the uniform probability distribution $\mathbf{\Pi}_{\mathrm{U}(G)}$ on $G$ is right invariant, the Haar measure $\rho$ coincides with $\mathbf{\Pi}_{\mathrm{U}(G)}$ up to scaling. 
By Wijsman's representation theorem~\eqref{eq:papi_wijsman}, the likelihood ratio for any maximal invariant $M_n = m_n(X^n)$ can be written as
\begin{equation}\label{eq:baby_wijsman}
  T^{M_n} =
  \frac{q^{M_n}(m_n(X^n))}{ p^{M_n}(m_n(X^n))}
  =
  \frac{\tfrac{1}{|G|}\sum_{g\in G}q_{g}(X^n)}{\tfrac{1}{|G|} \sum_{g\in G}p_{g}(X^n)}.
\end{equation}
Furthermore, Theorem~\ref{theo:ghk_theo1} above takes a simple form for finite parameter spaces, as is the case here, namely
\begin{equation}\label{eq:baby_kl_maxmin_duality}
  \max_{T_n \text{ $\E$-stat. }}\min_{g\in
    G}\mathbf{E}^{\mathbf{Q}}_g[\ln T_n]
  =
  \min_{\mathbf{\Pi}_0, \mathbf{\Pi}_1}
  \KL(\mathbf{\Pi}_1^{g} \mathbf{Q}_{g}, \mathbf{\Pi}_0^{g}
  \mathbf{P}_{g}),
\end{equation}
where the minimum on the right hand side is taken over all pairs of distributions on $G$. 
We now employ the information processing inequality \citep[Section
2.8]{cover_elements_2006} which says that  $\KL$ divergence decreases when taking functions of the data (i.e. if $\mathbf{A}$ and $\mathbf{B}$ are distributions for $X$ and $U= u(X)$, then $\KL(\mathbf{A} \| \mathbf{B}) \geq \KL(\mathbf{A}^{U} \| \mathbf{B}^{U})$). In our setting, the information processing equality implies that for any pair $(\mathbf{\Pi}_0,\mathbf{\Pi}_1)$ of probability distributions on $G$, 
\begin{equation}\label{eq:maxmin3}
  \KL(\mathbf{\Pi}_1^{g} \mathbf{Q}_{g}, \mathbf{\Pi}_0^{g}
  \mathbf{P}_{g})
  \geq
  \KL(\mathbf{Q}^{M_n}, \mathbf{P}^{M_n}).
\end{equation}
This lower bound can be rewritten as $\KL(\mathbf{Q}^{M_n}, \mathbf{P}^{M_n})= \KL(\mathbf{\Pi}_{\mathrm{U}(G)}^{g} \mathbf{Q}_{g}, \mathbf{\Pi}_{\mathrm{U}(G)}^{g} \mathbf{P}_{g})$ because of the second equality in \eqref{eq:baby_wijsman}.
Therefore, the minimum $\KL$ on the right hand side of \eqref{eq:baby_kl_maxmin_duality} is achieved for the particular choice of two uniform priors on $G$. 
Finally, we have that $\mathbf{E}_g^{\mathbf{Q}}[\ln T^{M_n}]=\KL(\mathbf{Q}^{M_n}, \mathbf{P}^{M_n})$ for all $g\in G$. 
Putting everything together
\begin{equation*}
  \max_{T_n \text{ $\E$-stat. }}\min_{g\in
    G}\mathbf{E}^{\mathbf{Q}}_g[\ln T_n]=\KL(\mathbf{Q}^{M_n},\mathbf{P}^{M_n})=\min_{g\in G}\mathbf{E}^{\mathbf{Q}}_g[\ln T^{M_n}];
\end{equation*}
in other words, $T^{M_n}$ is a GROW $\E$-statistic. A natural question is whether this same reasoning can be reproduced for infinite groups. 
If the Haar measure $\rho$ could always be chosen to be a probability measure, we could replace $\mathbf{\Pi}_{\mathrm{U}(G)}$ by $\rho$ everywhere in the reasoning above and conclude that $T^{M_n}$ is GROW in general. 
However, $\rho$ is finite if and only if $G$ is compact \citep[see e.g.][Proposition
3.3.5]{reiter_classical_2000}. 
This is a severe limitation; it would not even cover our guiding example, the t-test, because the group $(\reals^+, \ \cdot \ )$ is not compact (see Example~\ref{ex:t-test}). 
The main technical contribution of this article is the extension of the above optimality result to
amenable groups (see Section~\ref{sec:intro-group-invariance}).
Setting technical details aside, the core of the proof of Theorem~\ref{theo:main_theorem} is replacing the Haar measure above by a sequence of almost-right-invariant probability measures and showing that the $\KL$ converges to its infimum. 
Our arguments require the following additional assumptions. 

\begin{assumption}\label{ass:summary_topological}
  Let $G$ be a topological group acting on a topological space ${\cal X}^n$, both equipped with their Borel $\sigma$-algebra. The group $G$, the observation space $\mathcal{X}^n$, and the probabilistic models under consideration satisfy the following three properties:
  \begin{enumerate}
  \item\label{item:ass_top_spaces} As topological spaces, $G$ and $\mathcal{X}^n$ are Polish---separable, completely metrizable and locally compact. 
  \item\label{item:ass_action} The action of $G$ on ${\cal X}^n$ is free,
    continuous and proper.
  \item\label{item:ass_dominated} The models $\{\mathbf{P}_{g}\}_{g\in G}$ and
    $\{\mathbf{Q}_{g}\}_{g\in G}$ are invariant and have densities with respect
    to a common measure $\mu$ on ${\cal X}^n$ that is relatively left invariant
    with some multiplier $\chi$---$\mu\bracks{gB} = \chi(g)\mu\bracks{B}$ for any measurable set $B\subseteq {\cal X}^n$ and $g\in G$. All densities have a single common support. 
  \end{enumerate}
\end{assumption}

Assumption~\ref{ass:summary_topological} holds in most cases of interest for the purpose of parametric inference; some examples where it holds are given in Section~\ref{sec:test-mult-norm}.
The topological assumptions on $G$ and ${\cal X}$ have two purposes.
The first is to ensure that Wijsman's representation theorem \eqref{eq:papi_wijsman} holds.
Though \eqref{eq:papi_wijsman} requires slightly weaker assumptions than those presented here, see Section~\ref{sec:intro-group-invariance}, the strengthened conditions are needed for the second purpose: to ensure that the observation space ${\cal X}^n$ can be put in bijective and
bimeasurable\footnote{We call an invertible map bimeasurable if both the map and
  its inverse are measurable.} correspondence with a subset of
$G\times {\cal X}^n/G$, where the group $G$ acts naturally by multiplication on
the first component~\citep{bondar_borel_1976}. 
This will be used extensively in the proofs given in Section~~\ref{sec:proofs}.
With these assumptions, everything is in place to state the main results of this article.

\begin{theorem}
  \label{theo:main_theorem}
  Let $M_n = m_n(X^n)$ be a maximally invariant statistic under
  the action of the group $G$ on ${\cal X}^n$. 
  Assume that $G$ is amenable, that Assumption~\ref{ass:summary_topological} holds, and that there
  is $\varepsilon>0$ such that
  \begin{equation}\label{eq:kl_moment_assumption}
    \mathbf{E}^{\mathbf{Q}}_1\sqbrack{\abs{\ln\frac{q_{1}(X^n)}{p_{1}(X^n)}}^{1 + \varepsilon}},
    \mathbf{E}^{\mathbf{Q}^{M_n}}\sqbrack{\abs{\ln\frac{q^{M_n}(M_n)}{p^{M_n}(M_n)}}^{1 + \varepsilon}}
    <\infty,
  \end{equation}
  where the subindex $1$ refers to the unit element of $G$. Then 
  \begin{equation*}
    \inf_{\mathbf{\Pi}_0, \mathbf{\Pi}_1}
    \KL(\mathbf{\Pi}_1^g\mathbf{Q}_{g}, \mathbf{\Pi}_0^g\mathbf{P}_{g})
    =
    \KL(\mathbf{Q}^{M_n}, \mathbf{P}^{M_n}),
  \end{equation*}
  where the infimum is over all pairs $(\mathbf{\Pi}_0, \mathbf{\Pi}_1)$
  of probability distributions on $G$.
\end{theorem}

\begin{corollary}\label{cor:main_corollary}
  Under the assumptions of Theorem~\ref{theo:main_theorem}, a GROW
  $\E$-statistic for testing $\mathcal{H}_1$ against $\mathcal{H}_0$ as in \eqref{eq:main_problem_group_parametrization} is
  given by the likelihood ratio of any maximally invariant statistic $M_n=m_n(X^n)$, i.e.
  \begin{equation*}
    T^{M_n} = \frac{q^{M_n}(m_n(X^n))}{p^{M_n}(m_n(X^n))}.
  \end{equation*}
\end{corollary}

Corollary~\ref{cor:main_corollary} follows from the combination of Theorem~\ref{theo:main_theorem} with Theorem~\ref{theo:ghk_theo1}.
The results are stated in terms of the likelihood ratio of any maximal invariant for the original data. However, as mentioned briefly in Section~\ref{sec:intro-group-invariance} and in detail in Appendix~\ref{app:sufficiency}, one can use instead any maximal invariant for a sufficient statistic of the original data, rather than for the data itself.
The resulting likelihood ratio is identical and the optimality results therefore remain valid.
Next, we show that in the group-invariant setting, any statistic that is
GROW is also relatively GROW, meaning that any \E-statistic that maximizes
\eqref{eq:grow_objective} also maximizes \eqref{eq:relative_grow_objective}.
This is not true in general; the result relies crucially on the invariance of the models. 
For example, for contingency tables, the two $\E$-statistics are vastly different \citep{turner_two-sample_2023}. 

\begin{theorem}\label{thm:relgrow_is_grow}
  Suppose that Part \ref{item:ass_dominated} of Assumption~\ref{ass:summary_topological} is satisfied 
  and that, for each $g \in G$, there exists $h\in G$ such that
  $\KL(\mathbf{Q}_{g}, \mathbf{P}_{h})$ is finite.
  Then the map defined by
  \begin{equation*}
    g\mapsto \sup_{T_n \text{ \E-stat.}}\mathbf{E}^{\mathbf{Q}}_{g}[\ln T_n]\label{eq:1}
  \end{equation*}
  is constant. 
  Consequently, any maximizer of \eqref{eq:grow_objective}
  also maximizes \eqref{eq:relative_grow_objective}, that is, an $\E$-statistic is
   GROW if and only if it is relatively GROW for the hypothesis testing
  problem \eqref{eq:main_problem_group_parametrization}.
\end{theorem}

\begin{corollary}\label{cor:also_regrow}
  $T^{M_n}$ from Corollary~\ref{cor:main_corollary} is not only GROW, it is also
  relatively GROW.
\end{corollary}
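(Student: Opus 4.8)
The plan is to prove Proposition~\ref{prop:relgrow_is_grow} by establishing that the oracle quantity $g \mapsto \sup_{T_n \text{ \E-stat.}} \mathbf{Q}_g[\ln T_n(X^n)]$ does not depend on $g$, and then observing that Corollary~\ref{cor:also_regrow} follows immediately by combining this with Corollary~\ref{cor:main_corollary} (since once the subtracted term in \eqref{eq:relative_grow_objective} is a constant independent of $\theta_1$, the objectives \eqref{eq:grow_objective} and \eqref{eq:relative_grow_objective} differ by a constant and hence have the same maximizers). So the real content is the constancy claim.

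First I would recall what the oracle value is: for a \emph{single} distribution $\mathbf{Q}_g$ tested against the full composite null $\{\mathbf{P}_h\}_{h\in G}$, the largest achievable $\mathbf{Q}_g[\ln T_n]$ over \E-statistics is, by the simple-alternative case of GHK's Theorem~1 (or directly by the classical fact that $\sup_{T \text{ \E-stat.}} \mathbf{Q}[\ln T] = \inf_{\mathbf{P}' \in \conv(\mathcal{P})} \KL(\mathbf{Q}, \mathbf{P}')$ when $\mathcal P_0 = \mathcal{P}$ is the null), equal to $\inf_{\mathbf{\Pi}_0} \KL(\mathbf{Q}_g, \mathbf{\Pi}_0^{h}\mathbf{P}_h)$, the KL divergence from $\mathbf{Q}_g$ to the convex hull of the null. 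The finiteness hypothesis $\KL(\mathbf{Q}_g, \mathbf{P}_h) < \infty$ for some $h$ guarantees this infimum is finite, so the oracle value is well-defined and finite for every $g$.

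Next I would exploit invariance. Fix $g, g' \in G$ and set $a = g' g^{-1}$, so that $a g = g'$. Using the invariance relation from item~\ref{item:ass_dominated} of Assumption~\ref{ass:summary_topological} — which after the reduction in Remark~\ref{rem:dominated_models_left_invariant} reads $p_g(x^n) = p_1(g^{-1}x^n)$, $q_g(x^n) = q_1(g^{-1}x^n)$ with respect to the left-invariant measure $\nu$ — one checks that for any prior $\mathbf{\Pi}_0$ on $G$, pushing it forward by right translation $h \mapsto ah$ (equivalently, applying the map $x^n \mapsto a x^n$ inside the integral and using left-invariance of $\nu$) yields $\KL(\mathbf{Q}_{g'}, \mathbf{\Pi}_0'^{h}\mathbf{P}_h) = \KL(\mathbf{Q}_g, \mathbf{\Pi}_0^{h}\mathbf{P}_h)$ where $\mathbf{\Pi}_0'$ is the translated prior. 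Since translation is a bijection on the set of probability measures on $G$, taking the infimum over all $\mathbf{\Pi}_0$ on both sides gives that the oracle value at $g'$ equals the oracle value at $g$. Hence the map is constant.

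The main obstacle I anticipate is the measure-theoretic bookkeeping in the change-of-variables step: one must be careful that the group action genuinely transports the convex hull of the null onto itself, which is where left-invariance of the dominating measure $\nu$ and the specific form $p_g(x^n) = p_1(g^{-1}x^n)$ are used, and one must confirm the KL divergence is genuinely unchanged (not merely bounded) by the transformation — this requires that the Radon–Nikodym derivative $q_{g'}/(\mathbf{\Pi}_0'^h p_h)$ at $a x^n$ equals $q_g/(\mathbf{\Pi}_0^h p_h)$ at $x^n$, which follows from the invariance identities but should be written out. A secondary point is to verify that the assumptions of Proposition~\ref{prop:relgrow_is_grow} — in particular the existence of $h$ with $\KL(\mathbf{Q}_g, \mathbf{P}_h) < \infty$ — are actually implied by the assumptions of Corollary~\ref{cor:main_corollary}; this should follow from the moment condition \eqref{eq:kl_moment_assumption} together with the mutual absolute continuity assumed in Section~\ref{sec:technical-outline}, so that Corollary~\ref{cor:also_regrow} is a legitimate corollary with no extra hypotheses.
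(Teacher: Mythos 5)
Your proposal is correct and follows essentially the same route as the paper: the constancy of $g\mapsto\sup_{T_n}\mathbf{Q}_g[\ln T_n(X^n)]$ is obtained, exactly as in the paper's proof of Proposition~\ref{prop:relgrow_is_grow}, from the GHK Theorem~1 duality $\sup_{T}\mathbf{Q}_g[\ln T]=\inf_{\mathbf{\Pi}_0}\KL(\mathbf{Q}_g,\mathbf{\Pi}_0^{g'}\mathbf{P}_{g'})$ combined with translating the prior by $a=g'g^{-1}$ and using left-invariance of $\nu$, and the remaining finiteness hypothesis is verified, as in the paper, from the moment condition \eqref{eq:kl_moment_assumption} (the paper makes this explicit via $\KL(\mathbf{Q}_g,\mathbf{P}_g)=\KL(\mathbf{Q}_1,\mathbf{P}_1)\leq(\mathbf{Q}_1[|\log(q_1/p_1)|^{1+\varepsilon}])^{1/(1+\varepsilon)}$, i.e.\ H\"older/Lyapunov, which is the step you left as ``should follow''). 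The only cosmetic slip is calling $h\mapsto ah$ a right translation; the formula itself is the correct one and matches the paper's $\tilde{\mathbf{\Pi}}\{B\}=\mathbf{\Pi}\{gh^{-1}B\}$.
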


\begin{example}[continues=ex:t-test]
  It is known that he group $G = ({\mathbb R}^+,\cdot)$ of the
  t-test is amenable---the sequence of probability distributions $(\mathrm{Uniform}([-n,n]))_{n\in\nats}$ is almost right invariant.
  It is readily verified that Assumption~\ref{ass:summary_topological} and condition~\eqref{eq:kl_moment_assumption} are also satisfied. 
  Hence, Corollary~\ref{cor:main_corollary} implies that the likelihood ratio for the t-statistic, given in \eqref{eq:ttesthaar}, is a GROW $\E$-statistic. 
  Moreover, it follows from Corollary~\ref{cor:also_regrow} that it is also relatively GROW.
\end{example}

\subsection{Anytime-validity}\label{sec:anyt-valid-test}
As discussed in Section~\ref{sec:intro-e-statistics}, any \E-statistic can be used in the context of optional continuation with fixed sample sizes, but not all \E-statistics are suitable for optional stopping and optional continuation with data-dependent sample sizes. A sufficient condition that allows us to engage in these two additional uses is that the sequence of \E-statistics is a nonnegative martingale.
We now show that this is the case for the sequence $(T^{M_n})_{n\in \mathbb{N}}$.
\begin{proposition}\label{prop:grow_also_av}
  If $(M_n)_{n\in\nats}$ is a sequence of maximally invariant statistics
  $M_n = m_n(X^n)$ for the action of $G$ on ${\cal X}^n$, then the process
  $(T^{M_n})_{n\in\nats}$ is a nonnegative martingale with respect to the filtration $(\sigma(M_1,\dots,M_n))_{n\in \mathbb{N}}$ under any of the elements of the
  null hypothesis.
\end{proposition}
In particular, Proposition~\ref{prop:grow_also_av} implies that under every stopping time $\tau$ defined relative to the filtration induced by $(M_n)_{n\in\nats}$, $T^{M_{\tau}}$ is itself an $\E$-statistic; see Appendix~\ref{app:av} for the (standard) proof.
There is an interesting subtlety here however: if $\tau'$ is a stopping time relative to the filtration induced by $(X_n)_{n \in {\mathbb N}}$ but not relative to the coarser filtration induced by $(M_n)_{n\in\nats}$, then $T^{M_{\tau'}}$ is not necessarily an $\E$-statistic anymore.
Thus, with such $T^{M_{\tau'}}$, we cannot engage in optional continuation. 
This is generally not a problem, since most stopping times encountered in practice are stopping times relative to the filtration induced by $(M_n)_{n\in \nats}$.
This includes the aggressive stopping time `stop at the smallest $n$ at which $T^{M_n} \geq 1/\alpha$'.
However, in Appendix~\ref{app:filtration_counterexample} we give an explicit example of a stopping time $\tau'$ relative to the filtration induced by $(X_n)_{n\in\nats}$ in the t-test such that $T^{M_{\tau'}}$ is not an $\E$-statistic. 

\subsection{GROW for composite invariant hypotheses}
\label{sec:comp-invar-hypoth}
Until now we have considered null and alternative hypotheses that become simple
when viewed through the lens of the maximally invariant statistic. As we saw, in
the t-test this corresponds to testing simple hypotheses about the effect size
$\delta$. 
In this section we consider hypotheses that are composite in the maximally invariant parameter. 
We also consider problems in which a fixed prior is placed on the maximally invariant parameter $\delta$. This implements the method of mixtures, a standard method to combine test martingales \citep{Wald45,darling1968some}, which was already used in the context of the anytime-valid t-test \citep{lai1976confidence}.

Suppose that the initial hypotheses are not defined by a single value of the maximally invariant parameter $\delta=\delta(\theta)$, as in~\eqref{eq:target_hypothesis_problem}, but are instead given by
\begin{equation}\label{eq:orig_target_composite_problem}
      {\cal H}_0: \delta(\theta) =  \delta,\ \ \delta\in \Delta_0
  \text{ \ vs. \ } {\cal
    H}_1:\delta(\theta) = \delta,\ \ \delta\in \Delta_1,
\end{equation}
where $\Delta_0$ and $\Delta_1$ are two sets of possible values of $\delta = \delta(\theta)$. 
In Section~\ref{sec:intro-group-invariance}, we reparametrized $\{\mathbf{P}_\theta\}_{\theta\in\Theta: \delta(\theta)=\delta_0}$ and $\{\mathbf{P}_\theta\}_{\theta\in\Theta: \delta(\theta)=\delta_1}$ in terms of $G$, and denoted the resulting models as $\{\mathbf{P}_g\}_{g\in G}$ and $\{\mathbf{Q}_g\}_{g\in G}$ respectively.
Instead of only considering $\delta_0$ and $\delta_1$, we can do the same for all $\delta\in \Delta_0$ and $\delta\in \Delta_1$. 
We denote the resulting models as $\{\mathbf{P}_{g,\delta}\}_{g\in G, \delta\in \Delta_0}$ and $\{\mathbf{Q}_{g,\delta}\}_{g\in G,\delta\in \Delta_1}$.
As an example, $\mathbf{P}_{g,\delta_0}$ and $\mathbf{Q}_{g,\delta_1}$ correspond to what were previously simply $\mathbf{P}_g$ and $\mathbf{Q}_{g}$. 
The problem~\eqref{eq:orig_target_composite_problem} may now be rewritten as
\begin{equation}\label{eq:target_composite_problem}
  {\cal H}_0: X^n\sim \mathbf{P}_{g, \delta},\ \  \delta\in \Delta_0,\  g\in G
  \text{ \ vs. \ }
  {\cal H}_1: X^n\sim \mathbf{Q}_{g, \delta},\ \ \delta\in \Delta_1, \ g\in G.
\end{equation}
Since the distribution of a maximally invariant function of the data $M_n = m_n(X^n)$ depends on the
parameter $\delta$,  these hypotheses are not simple when data are reduced through invariance. 
The main objective of this section is to show that, when searching
for a GROW $\E$-statistic for \eqref{eq:target_composite_problem}, it is enough
to do so for the invariance-reduced problem
\begin{equation}\label{eq:inv_red_composite_problem}
  {\cal H}_0: M_n\sim \mathbf{P}_{\delta}^{M_n},\ \  \delta\in \Delta_0
  \text{ \ vs. \ }
  {\cal H}_1: M_n\sim \mathbf{Q}_{\delta}^{M_n},\ \ \delta\in \Delta_1.
\end{equation}
We follow the same steps that we followed in
Section~\ref{sec:simp-invar-hypoth}, and begin by showing that if there exists a
minimizer for the $\KL$ minimization problem associated to
(\ref{eq:inv_red_composite_problem}), then it has the same value as that
associated to (\ref{eq:target_composite_problem}).
\begin{proposition}\label{prop:composite_invariant_grow}
  Assume that there exists a pair of probability distributions
  $\mathbf{\Pi}^\star_0,\mathbf{\Pi}_1^\star$ on $\Delta_0$ and $\Delta_1$ that
  satisfy
  \begin{equation}\label{eq:superpair}
    \KL(\mathbf{\Pi}_1^{\star \delta}\mathbf{Q}_{\delta}^{M_n},
    \mathbf{\Pi}_0^{\star \delta}\mathbf{P}_{\delta}^{M_n})
    =
    \min_{\mathbf{\Pi}_0, \mathbf{\Pi}_1}
    \KL(\mathbf{\Pi}_1^\delta\mathbf{Q}_{\delta}^{M_n}, \mathbf{\Pi}_0^\delta\mathbf{P}_{\delta}^{M_n}).
  \end{equation}
  For each $g\in G$, define the probability distributions
  $\mathbf{P}^{\star }_g = \mathbf{\Pi}_0^{\star \delta}\mathbf{P}_{g, \delta}$
  and $\mathbf{Q}^\star_g = \mathbf{\Pi}_1^{\star \delta}\mathbf{Q}_{g,\delta}$ on
  ${\cal X}^n$. If the models $\{\mathbf{P}^\star_g\}_{g\in G}$ and
  $\{\mathbf{Q}^\star_g\}_{g\in G}$ satisfy the assumptions of
  Theorem~\ref{theo:main_theorem}, then
  \begin{equation*}
    \inf_{\mathbf{\Pi}_0, \mathbf{\Pi}_1}
    \KL(\mathbf{\Pi}_1^{g,\delta}\mathbf{Q}_{g,\delta}, \mathbf{\Pi}_0^{g,\delta}\mathbf{P}_{g,\delta})
    =
    \min_{\mathbf{\Pi}_0, \mathbf{\Pi}_1}
    \KL(\mathbf{\Pi}_1^\delta\mathbf{Q}_{\delta}^{M_n}, \mathbf{\Pi}_1^\delta\mathbf{P}_{\delta}^{M_n}).
  \end{equation*}
\end{proposition}
From this proposition, using Theorem~\ref{theo:ghk_theo1} and the steps
used for Corollaries \ref{cor:main_corollary} and \ref{cor:also_regrow}, we can
conclude that the ratio of the Bayes marginals for the invariance-reduced data
$M_n$ using the optimal priors $\mathbf{\Pi}^{\star}_0$ and
$\mathbf{\Pi}^\star_1$ is both a GROW and a relatively GROW $\E$-statistic for
(\ref{eq:target_composite_problem}). We now state the corollary and apply it to
to our running example, the t-test.
\begin{corollary}\label{cor:composite}
  Under the assumptions of Proposition~\ref{prop:composite_invariant_grow}, the
  statistic given by
  \begin{equation*}
    T^\star
    =
    \frac{
      \int q_\delta^{M_n}(m_n(X^n)) \rmd \mathbf{\Pi}^{\star}_1(\delta)
    }{
      \int p_\delta^{M_n}(m_n(X^n)) \rmd \mathbf{\Pi}^{\star}_0(\delta)
    }
  \end{equation*}
  is a (both absolute and relative) GROW  $\E$-statistic for
  (\ref{eq:target_composite_problem}).
\end{corollary}
\begin{example}[continues=ex:t-test]
  Suppose, in the t-test setting, that we are interested in testing
  \begin{equation*}
    {\cal H}_0: \delta\in (-\infty, \delta_0] \text{ \ \ vs. \ \ }
    {\cal H}_1: \delta\in [\delta_1, \infty)
  \end{equation*}
  for some $\delta_0 < \delta_1$, where, recall, $\delta = \mu / \sigma$ is the
  maximally invariant parameter. 
  Corollary~\ref{cor:composite} shows that no
  loss is incurred if we only look among $\E$-statistics that are a function of
  the maximally invariant function $M_n$, the t-statistic. Since the density
  of the t-statistic is monotone in $\delta$, we can use Proposition 3 of \citetalias[Section 3.1.]{grunwald_safe_2023} to infer that the minimum
  in (\ref{eq:superpair}) is achieved by the probability distributions
  $\mathbf{\Pi}^\star_0$ and $\mathbf{\Pi}^\star_1$ that put all of their mass
  on $\delta_0$ and $\delta_1$, respectively. Corollary~\ref{cor:composite}
  yields that $T^*_n = p^{M_n}_{\delta_1} / p^{M_n}_{\delta_0}$ is GROW among
  all possible $\E$-statistics of the original data (not only the
  scale-invariant ones). This result can be extended to other families with this type
  of monotonicity property.
\end{example}
Another approach to deal with the unknown parameter values is to employ proper prior distributions, as is standard practice both within Bayesian statistics and with $\E$-statistics.
That is, we may want to use specific priors $\tilde{\mathbf{\Pi}}_0$ and $\tilde{\mathbf{\Pi}}_1$ on $\Delta_0$ and
$\Delta_1$ respectively. If we define for each $g$ the probability distributions
$\tilde{\mathbf{P}}_g = \tilde{\mathbf{\Pi}}_0^\delta\mathbf{P}_{g,\delta}$ and
$\tilde{\mathbf{Q}}_g = \tilde{\mathbf{\Pi}}_1^\delta\mathbf{Q}_{g,\delta}$, and
the resulting models $\{\tilde{\mathbf{P}}_g\}_{g\in G}$ and
$\{\tilde{\mathbf{Q}}_g\}_{g\in G}$ also satisfy the conditions of
Corollary~\ref{cor:main_corollary}, the proof of
Proposition~\ref{prop:composite_invariant_grow} also provides the following
corollary.
\begin{corollary}\label{cor:delta_fixed_prior}
  Let $\tilde{\mathbf{\Pi}}_0$ and $\tilde{\mathbf{\Pi}}_1$ be two probability
  distributions on $\Delta_0$ and $\Delta_1$, respectively. Let
  $\{\tilde{\mathbf{P}}_g\}_{g\in G}$ and $\{\tilde{\mathbf{Q}}_g\}_{g\in G}$ be
  two probability models defined by
  $\tilde{\mathbf{P}}_g = \tilde{\mathbf{\Pi}}_0^\delta\mathbf{P}_{g,\delta}$
  and
  $\tilde{\mathbf{Q}}_g = \tilde{\mathbf{\Pi}}_1^\delta\mathbf{Q}_{g,\delta}$.
  If $\{\tilde{\mathbf{P}}_g\}_{g\in G}$ and $\{\tilde{\mathbf{Q}}_g\}_{g\in G}$
  satisfy the conditions of Corollary~\ref{cor:main_corollary}, then the \E-statistic
  \begin{equation}\label{eq:finalbits}
    \tilde{T}_n
    =
    \frac{
      \int q_\delta(m_n(X^n)) \rmd\tilde{\mathbf{\Pi}}_1(\delta)
    }{
      \int p_\delta(m_n(X^n)) \rmd\tilde{\mathbf{\Pi}}_0(\delta)
    }
  \end{equation}
  is both GROW and relatively GROW for testing
  $\{\tilde{\mathbf{P}}_g\}_{g\in G}$ against
  $\{\tilde{\mathbf{Q}}_g\}_{g\in G}$.
\end{corollary}
\begin{example}[continues=ex:t-test]
  \cite{Jeffreys61} proposed a Bayesian version of the t-test based on the Bayes factor (\ref{eq:ttesthaar}) with $\delta_0$ to $0$ and a Cauchy prior centered at $0$ on $\delta_1$. Popularized as the {\em Bayesian t-test\/} \citep{rouder-2009-bayes}, it is an instance of (\ref{eq:finalbits}) with $\tilde{\mathbf{\Pi}}_1$ set to aforementioned Cauchy prior and $\tilde{\mathbf{\Pi}}_0$ putting mass 1 on $\delta_0 = 0$. It is itself an $\E$-statistic \citepalias{grunwald_safe_2023}, but condition (\ref{eq:kl_moment_assumption}) of Theorem~\ref{theo:main_theorem} does not hold because the Cauchy distribution does not have any moments. Thus, we cannot verify whether (\ref{eq:finalbits}) has the relative GROW property. However, as soon as we replace the Cauchy prior by any prior centered at $0$ for which, for some $\varepsilon > 0$, the $(2+\varepsilon)$-th moment exists (such as e.g. a normal distribution centered at 0, as has also been proposed for this problem), we can use Lemma~\ref{lem:submit} in the next section (applied with $d=1$) to infer that assumption \eqref{eq:kl_moment_assumption} holds. 
  Finally, Proposition~\ref{cor:delta_fixed_prior} can be applied to conclude that the corresponding Bayes factor is then relatively GROW.
 \end{example}

\section{Testing multivariate normal distributions under group invariance}
\label{sec:test-mult-norm}
We show how the theory developed in the previous sections can be applied to
hypothesis testing under normality assumptions. 
The latter is particularly suited for the group-invariant setting, because the family of normal distributions carries a natural invariance under scale-location transformations, as we have already seen in Example~\ref{ex:t-test}.
Different subsets of scale-location transformations correspond to different parameters of interest.
We develop two examples in detail. 
The first is an alternative to Hotelling's $T^2$ for testing whether the (multivariate) mean of the distribution is identically zero.
The corresponding group is that of lower triangular matrices with positive entries on the diagonal.
This test is in direct relation with the step-down procedure of
\citet{roy_tests_1958}\footnote{Even though not explicitly in group-theoretic
  terms, the test of \citet{roy_tests_1958} test is based on a different
  maximally invariant function of the data. 
  The fact that the test statistic of
  \citet{roy_tests_1958} is maximally invariant is
  shown by \citet{subbaiah_comparison_1978}} \citep[see
also][]{subbaiah_comparison_1978}. 
The second example that we consider is, in the setting of linear regression, a test for whether or not a specific regression coefficient is identically zero. 
In this case, the group is a subset of the affine linear group.


\subsection{The lower triangular group}\label{sec:lower_triang}
Consider data $X^n = (X_1, \dots, X_n)$ where $X_i\in {\cal X} = \reals^d$. We
assume each $X_i$ to have a Gaussian distribution $N(\mu,\Sigma)$ with unknown
mean $\mu\in\reals^d$ and covariance matrix $\Sigma$. We consider a test for
whether the mean $\mu$ of the distribution is zero. 
To formalize the test, recall that 
the Cholesky decomposition of a positive definite matrix $\Sigma$ is  $\Sigma = \Lambda \Lambda'$ for a unique $\Lambda \in \LTP(d)$. 
Here, $\LTP(d)$ denotes the group of lower triangular matrices with positive entries on the diagonal, which is amenable.
We can therefore parametrize the Gaussians in terms of $(\mu, \Lambda)$, taking the parameter space to be $\Theta = \reals^d \times \LTP(d)$. 
In this parametrization, consider the following hypothesis testing problem, which generalizes the t-test (Example~\ref{ex:t-test}) to dimensions $d\geq 1$:
\begin{equation}\label{eq:lower_triang_test}
  {\cal H}_0: \Lambda^{-1}\mu = \delta_0
  \text{ \ vs. \ }
  {\cal H}_1: \Lambda^{-1}\mu = \delta_1.
\end{equation}
A test for whether $\mu$ is zero can be obtained by setting $\delta_0 = 0$. 
The group $\LTP(d)$ acts freely and continuously on $\mathcal{X}^n$ through component-wise matrix multiplication, i.e. $(L,X^n) \mapsto (LX_1,\dots, LX_n)$ for any $L\in \LTP(d)$.
This action is continuous and free, and can be shown to be proper on the restriction of $\mathcal{X}^n$ to matrices of rank $d$ if $n\geq d+1$. 
If $X_i\sim N(\mu,\Lambda)$, then $LX_i\sim N(L\mu,L\Lambda)$, so that $\LTP(d)$ acts on $\Theta$ by 
$  (L,  (\mu, \Lambda)) \mapsto  (L\mu, L \Lambda)$
for each $(\mu,\Lambda)\in \Theta$ and $L\in \LTP(d)$.
A maximally invariant parameter under this action is $\delta(\mu,\Lambda) = \Lambda^{-1}\mu$, 
so that~\eqref{eq:lower_triang_test} is indeed a test of the form described in Section~\ref{sec:intro-group-invariance}.
Furthermore, seen as a subset of $\reals^{d\times n}$, the restriction of the Lebesgue measure to $\mathcal{X}^n$ is relatively left-invariant with multiplier $\chi(L)=|\det(L)|^n$.
It follows that Assumption~\ref{ass:summary_topological} holds and therefore, the likelihood ratio of any maximally invariant statistic is  GROW by Corollary~\ref{cor:main_corollary}.

By the results of \cite{hall_relationship_1965}, recapped in Appendix~\ref{app:sufficiency}, this likelihood ratio must coincide  with that of an invariantly sufficient statistic for $\delta$.
We now proceed to compute one such statistic. 
Recall that the pair $S_n=s_n(X^n)=(\bar{X}_n,\bar{V}_n)$, consisting of the unbiased estimators $\bar{X}_n$ and $\bar{V}_n$ for the mean and covariance matrix respectively, is a sufficient statistic for $(\mu,\Sigma)$. 
Analogous to the technique we used for the parameter space, we can perform the Cholesky decomposition $\bar{V}_n=L_nL_n'$.
The statistic $M_{{\cal S}, n} = m_{{\cal S},n}(S_n) = \sqrt{n/(n-1)} L_n^{-1}\bar{X}_n$
is maximally invariant under the action of $\LTP(d)$ on $S_n$; in other words, $M_{\mathcal{S},n}$ is invariantly sufficient for $\delta$.
Hence, the GROW $\E$-statistic can be written as
$T^{M_{{\cal S},n}} = q^{M_{{\cal S},n}} / p^{M_{{\cal S},n}}$. 
Since it was used in Example~\ref{ex:t-test} (underneath Corollary~\ref{cor:delta_fixed_prior}), we give an explicit expression for the likelihood ratio
$T^{M_{{\cal S},n}}$ when $\delta_0 = 0$, from which values for other $\delta_0$ can be computed. It is based on a more general computation in Appendix~\ref{app:furtherproofs}.
\begin{lemma}\label{lem:submit}
  For the maximally invariant statistic
  $M_{{\cal S},n} = \sqrt{\tfrac{n}{n-1}}L_n^{-1}\bar{X}_n$, we have
  \begin{equation}\label{eq:lr_lower_triang}
    \frac{
      q^{M_{{\cal S},n}}(m_{{\cal S},n}(S_n))
    }{
      p^{M_{{\cal S},n}}(m_{{\cal S},n}(S_n))
    }
    =
    \rme^{-\frac{n}{2}\norm{\delta_1}^2}
    \int
      \rme^{n\inner{\delta_1}{T A^{-1}_n M_{{\cal S},n}}}
    \rmd\mathbf{P}_{n,I}(T),
  \end{equation}
  where $A_n$ is the lower triangular matrix resulting from the Cholesky
  decomposition $I + M_{{\cal S},n}M_{{\cal S},n}' = A_nA_n'$, and
  $\mathbf{P}_{n,I}^T$ is the distribution according to which
  $nTT'\sim W(n, I)$, a Wishart distribution.
\end{lemma}
\begin{proof}
  This follows from Proposition~\ref{prop:triangular_maxinv_distr} in Appendix~\ref{app:furtherproofs} with
  $\gamma = \sqrt{n}\delta_1$, $X = \sqrt{n}\bar{X}_n$, $m = n - 1$, and  $S = \bar{V}_n$.
\end{proof}

\subsection{Linear regression}\label{sec:regression}
Consider the problem of testing whether one of the coefficients of a linear
regression is zero under Gaussian error assumptions. 
Assume that the observations are of the form $(X_1, Y_1, Z_1), \dots, (X_n, Y_n, Z_n)$, where
$X_i,Y_i \in \reals$ and $Z_i\in \reals^{d}$ for each $i$.
We consider the the linear model given by
\begin{equation*}
  Y_i= \gamma X_i + \beta' Z_i + \sigma\varepsilon_i,
\end{equation*}
where $\gamma\in\reals$, $\beta\in \reals^d$ and $\sigma\in \reals^+$ are the
parameters, and $\varepsilon_1, \dots,\varepsilon_n$ are i.i.d.\ errors with
standard Gaussian distribution $N(0,1)$.
We are interested in testing
\begin{equation}\label{eq:lin_reg_test}
  {\cal H}_0: \gamma / \sigma = \delta_0
  \text{ \ vs. \ }
  {\cal H}_1: \gamma / \sigma = \delta_1.
\end{equation}
A test for whether $\gamma =0$ is readily obtained by taking $\delta_0 = 0$.
This problem is invariant under the action of the group $G=\mathbb{R}^+\times \mathbb{R}^d$ given by $((c, v), (X, Y, Z))\mapsto (X, cY + v'Z, Z)$~\citep{Kariya1980a,eaton_group_1989}. 
The corresponding action of $G$ on the parameter space is given by
$((c,v), (\gamma, \beta, \sigma))\mapsto (c\gamma, c\beta + v, c\sigma)$.
A maximally invariant parameter is $\delta(\gamma,\beta,\sigma) = \gamma / \sigma$, so that the problem in~\eqref{eq:lin_reg_test} is of the form described in Section~\ref{sec:intro-group-invariance}. 
Furthermore, it can be shown that the action of $G$ on $\mathcal{X}$ is continuous and proper, and that $G$ is amenable.
Since the Lebesgue measure is again relatively left invariant, it follows that Assumption~\ref{ass:summary_topological} holds.
All that remains is to find a maximally invariant function of the data. 
To this end,
define the vectors $Y^n = (Y_1, \dots, Y_n)'$ and
$X^n = (X_1, \dots, X_n)'$, and the $n\times d$ matrix
$\boldsymbol{Z}_n=[Z_1,\dots,Z_n]'$ whose rows are the vectors $Z_1, \dots,Z_n$. Assume that
$\boldsymbol{Z}_n$ has full rank. 
A maximally invariant function of the data is given by
$M_n=\paren{\frac{\boldsymbol{A}_n^{\prime}Y^n}{\|\boldsymbol{A}_n^{\prime}Y^n\|},
  X^n, \boldsymbol{Z}_n}$, where $\boldsymbol{A}_n$ is an $(n - d)\times n$ matrix whose
columns form an orthonormal basis for the orthogonal complement of the column
space of $\boldsymbol{Z}_n$~\citep{Kariya1980a,BhowmikKing2007}. 
In order to compute the likelihood ratio for $M_n$, we assume that the mechanism that generates $X^n$ and $\boldsymbol{Z}_n$ is the same under both
hypotheses, so that we only need to consider the distribution of $\boldsymbol{U}_n = \frac{\boldsymbol{A}_n^{\prime}Y^n}{\|\boldsymbol{A}_n^{\prime}Y^n\|}$
conditionally on $X^n$ and $\boldsymbol{Z}_n$. \citet{BhowmikKing2007} show that
for arbitrary effect size $\delta$, the density of this distribution is given by
\begin{align*}
  p^{\boldsymbol{U}_n}_{\delta}(u|X^n, \boldsymbol{Z}_n)
  =
  \frac12
  \Gamma
  \left(\frac k2\right)
  \pi^{-\frac k2}e^{c( \delta )}&\left[_1F_1\left(\frac k2, \frac12, \frac{a^2(u,\delta)}{2} \right)\right.
  \\&+\sqrt 2 a(u, \delta)\left. \frac{\Gamma((1+k)/2)}{\Gamma(k/2)}{_1F_1}\left(\frac {1+k}2, \frac32, \frac{a^2(u,\delta)}{2} \right)\right],
\end{align*}
where $k=n-d$, $u$ is a unit vector in $\reals^k$,
$a\left(u , \delta\right)=\delta X^{n\prime} \boldsymbol{A}_nu, c\left(\delta \right)=
-\frac12 \delta^2 X^{n\prime } \boldsymbol{A}_n\boldsymbol{A}_n^{\prime} X^n$, and $_1F_1$ is
the confluent hypergeometric function. This can be used to compute the likelihood ratio for $M_n$, which is
the relatively GROW \E-statistic for testing
\eqref{eq:lin_reg_test}. 
In fact, \citeauthor{BhowmikKing2007} compute in more generality the density of the maximally invariant statistic when $X$ is allowed to have a non-linear effect on $Y$.
This does not impact the group invariance structure of the model, so that our results
can also be used in this semilinear setting if the hypotheses are adjusted accordingly.

\section{Discussion and Future Work}
\label{sec:discussion}
In this concluding section we bring up an issue that deserves further discussion and may inspire future work. 
We also use this issue to highlight the differences between our work and related work in a Bayesian context.

\subsection{Amenability is not always necessary}\label{sec:ameneyourself}
We have shown that, if a hypothesis testing problem is invariant under a group
$G$ and our assumptions are satisfied, then amenability of $G$ is a sufficient
condition for the likelihood ratio of the maximal invariant to be GROW. A
natural question is therefore whether amenability is also a necessary condition for the
latter to hold.
This is not only of theoretical relevance: some groups that are important for statistical practice are not amenable. 
For instance, the general linear group $\GL(d)$, which is the relevant group in Hotelling's test, is
nonamenable. 
The setup of Hotelling's test is similar to that in Section~\ref{sec:lower_triang},
except that the hypotheses are given by
\begin{equation}\label{eq:hotelling-test-problem}
  {\cal H}_0: \|\Lambda^{-1}\mu\|^2 = 0
  \text{ \ vs. \ }
  {\cal H}_1: \|\Lambda^{-1}\mu\|^2 = \gamma.
\end{equation}
A maximally invariant statistic is the $T^2$-statistic
$n\bar{X}_n'\bar{V}_n^{-1}\bar{X}_n$, where, as in
Section~\ref{sec:lower_triang}, $\bar{X}_n$ and $\bar{V}_n$ are the unbiased
estimators of the mean and the covariance matrix, respectively. Notice that this
test is equivalent to (\ref{eq:lower_triang_test}) with the alternative expanded
to $\Delta=\{\delta: \|\delta\|^2=\gamma\}$, but that $T^2$ is not a maximal
invariant under the lower triangular group. However, \citet{Giri1963} have shown
that for $d=2$ and $n=3$, the likelihood ratio of the $T^2$-statistic can be
written as an integral over the likelihood ratio in~\eqref{eq:lr_lower_triang}
with a proper prior on $\delta \in \Delta$ as defined there. It follows as a
result of Proposition~\ref{prop:composite_invariant_grow} that the likelihood
ratio of the $T^2$-statistics is also GROW in the case that $d=2$ and $n=3$.
These results can be extended to the case that $d=2$ with arbitrary $n$ by the
work of~\citet{shalaevskii_minimax_1971}. An interesting question is whether amenability can be  replaced by a weaker
condition, and/or whether a counterexample to Theorem~\ref{theo:main_theorem}
for nonamenable groups can be given.

\subsection{Nonuniqueness issues with right Haar priors do not arise}
As the above example  illustrates, it is sometimes possible to represent the same
$\mathcal{H}_0$ and $\mathcal{H}_1$ via (at least) two different groups. 
As we explain in full detail in Appendix~\ref{app:herecomesthesun}, this is generally unproblematic: as soon as the assumptions of Theorem~\ref{theo:main_theorem} hold for at least one of the two groups, we can construct the GROW $\E$-statistic, and it is uniquely defined. Superficially, this may seem to contradict \citet{sun_objective_2007} who point
out that in some settings, the underlying group is not uniquely determined and then the  right Haar prior for the considered model $\mathcal{P}$ is not uniquely defined. Then, different choices of right Haar prior give different Bayesian posteriors---a fact that has sometimes been taken as a criticism of objective Bayesian approaches. Such nonuniqueness is avoided in our approach. The reason is, essentially, that 
whereas the GROW $\E$-statistic ${T}^*_n$ is a ratio between Bayes
marginals for different models ${\cal H}_0$ and ${\cal H}_1$ at the same sample size $n$,
the Bayes predictive distribution based on a single model $\mathcal{P}$ is
a ratio between Bayes marginals for the same $\mathcal{P}$ at different
sample sizes $n$ and $n-1$. The role of `same' and `different' being interchanged, it turns out that this Bayes predictive distribution {\em can\/}  depend on the group on which the right Haar prior for $\mathcal{P}$ is based. Since the Bayes predictive distribution can be rewritten as a marginal over the Bayes posterior, which is 
\citet{sun_objective_2007}'s quantity of interest, it is then not surprising that this Bayes posterior may also change if the underlying group is changed. Instead, one may quantify uncertainty by the 
{\em $\E$-posterior}, an $\E$-statistic-based measure of uncertainty recently put forward by \citet{Grunwald23}: if one replaces the standard Bayes posterior on  $\delta$  
by the $\E$-posterior based on the GROW $\E$-statistic $T^*_n$, the nonuniqueness issue disappears as well. 

\section{Proofs}\label{sec:proofs}
In this section, we give all the proofs that were omitted earlier. 
We first provide two remarks that will be useful throughout the proofs. 
\begin{remark}\label{rem:dominated_models_left_invariant}
   Without loss of generality, we may modify~\ref{item:ass_dominated} in Assumption~\ref{ass:summary_topological} as follows:
  \begin{enumerate}[1']
    \setcounter{enumi}{2}
  \item\label{item:modified_assumption} The models $\{\mathbf{P}_{g}\}_{n\in\nats}$ and
    $\{\mathbf{Q}_{g}\}_{n\in\nats}$ are invariant and have densities with
    respect to a common measure $\nu$ on ${\cal X}^n$ that is left invariant.
  \end{enumerate}
  The reason that there is no loss in generality is that from any relatively
  left-invariant measure $\mu$ with multiplier $\chi$, a left-invariant measure
  $\nu$ can be constructed. Indeed, \citet[][Chap. 7, §2 Proposition
  7]{bourbaki_integration_2004} shows that, under our assumptions, for any
  multiplier $\chi$ there exists a function $\varphi:{\cal X}^n\to \reals$ with
  the property that $\varphi(gx) = \chi(g)\varphi(x)$ for any $x\in {\cal X}$
  and $g\in G$. With this function at hand, one can define the measure
  $\rmd \nu (x) = \rmd \mu (x) / \varphi(x)$, which is left invariant. After
  multiplication by $\varphi$, probability densities with respect to $\mu$ are
  readily transformed into probability densities with respect to $\nu$.
  The invariance of the models implies that the densities of $\mathbf{P}_g$ and $\mathbf{Q}_g$ with respect to $\nu$ take the form $p_g(x^n) = p_1(g^{-1}x^n)$ and
  $q_{g}(x^n) = q_{1}(g^{-1}x^n)$ for any $x^n\in \mathcal{X}^n$, where $1$ denotes the unit element of the group $G$.
  It follows that for any $g,h\in G$ it holds that $p_g(x^n)=p_h(hg^{-1}x^n)$ for all $x^n\in \mathcal{X}^n$. A similar statement can be made for $q_g$.
\end{remark}
\begin{remark}
  So far, we have only considered the right Haar measure $\rho$ on $G$, however  on any locally compact group $G$ there also exists a left-invariant measure
  $\lambda$, called the left Haar measure. 
  It can be shown that $\lambda$ is relatively right invariant with a multiplier $\Delta$, that is, for any measurable $B\subseteq G$ and $g\in G$ it holds that
  $\lambda\{Bg\} = \Delta(g)\lambda\{B\}$ for any $g\in G$. 
  Moreover, a computation shows that the measure $\rho'$ defined by $\rho'\{B\} = \lambda\{B^{-1}\}$ for each measurable $B\subseteq G$, is right invariant; in other words, $\rho'$ is a
  right Haar measure. 
  We may therefore choose $\rho$ to be equal to $\rho'$ and in the following, we always refer to right and left Haar measures that are related to each other by that identity. 
  In our proofs we will use that for any integrable function $f$ defined on $G$, the identities
  $\int f(h) \rmd\rho(h) = \int f(h) / \Delta(h) \rmd \lambda(h)$ and
  $\int f(h^{-1}) \rmd\lambda(h) = \int\rmd f(h) \rmd\rho(h)$
  hold~\citep[see][Section 1.3]{eaton_group_1989}.

\end{remark}

\subsection{Proofs of Theorem~\ref{thm:relgrow_is_grow}, Proposition~\ref{prop:grow_also_av}, Proposition~\ref{prop:composite_invariant_grow}}
Here we prove all results in the main text except the main Theorem~\ref{theo:main_theorem}, which is deferred to the next subsection. 

\begin{proof}[Proof of Theorem~\ref{thm:relgrow_is_grow}]
  Let $g$ be a fixed group element of $G$. Recall from
  Remark~\ref{rem:dominated_models_left_invariant} that we may assume that both
  models are dominated by a left invariant measure $\nu$ on ${\cal X}$. 
  Theorem 1 by \citetalias{grunwald_safe_2023} (its simplest instantiation in their Section 2) implies that
  \begin{equation}\label{eq:baby_kl_duality}
    \sup_{T_n \text{ \E-stat.}}\mathbf{E}^{\mathbf{Q}}_{g}[\ln T_n]
    =
    \inf_{\mathbf{\Pi}_0}
    \KL(\mathbf{Q}_{g}, \mathbf{\Pi}_0^{g'}\mathbf{P}_{g'}),
  \end{equation}
  where the infimum is over all distributions $\mathbf{\Pi}_0$ on $G$. We will
  show that for any pair $g,h\in G$ and any prior $\mathbf{\Pi}$ on $G$, there
  exists a prior $\tilde{\mathbf{\Pi}}$ such that
  \begin{equation}\label{eq:kl_is_quasi_invariant}
    \KL(\mathbf{Q}_{g}, \mathbf{\Pi}^{g'}
    \mathbf{P}_{g'})=\KL(\mathbf{Q}_{h}, \tilde{\mathbf{\Pi}}^{g'}
    \mathbf{P}_{g'}).
  \end{equation}
  From this, our claim will follow: by symmetry, the previous display implies
  that
  $g\mapsto \sup_{T_n \text{ \E-stat.}}\mathbf{E}^{\mathbf{Q}}_{g}[\ln
  T_n]$ is constant over $G$ because of its relation to the $\KL$
  minimization in (\ref{eq:baby_kl_duality}). Let
  $\bar{p} = \int p_{g'}\rmd\mathbf{\Pi}(g')$, use both the invariance of $\nu$ and
  of ${\cal Q}$, and compute
  \begin{align*}
    \KL(\mathbf{Q}_{g}, \mathbf{\Pi}^{g'} \mathbf{P}_{g'})
    &=
      \mathbf{E}^{\mathbf{Q}}_{g} \left[\ln\frac{q_{g}(X^n)}{\bar{p}(x^n)}  \right]
    =
      \int
      q_{g}(x^n)\ln\frac{q_{g}(x^n)}{\bar{p}(x^n)}
      \rmd\nu(x^n) \\
    &=
      \int
      q_{h}(hg^{-1}x^n)\ln\frac{q_{h}(hg^{-1}x^n)}{\bar{p}(x^n)}
      \rmd\nu(x^n).
  \end{align*}
  Next, define $\tilde{\mathbf{\Pi}}$ as the probability distribution on $G$
  that assigns $\tilde{\mathbf{\Pi}}\{H\in B\}=\mathbf{\Pi}\{H \in gh^{-1}B\}$ for any
  measurable set $B\subseteq G$. Then
  \begin{equation*}
    \bar{p}(x^n)
    =
    \int p_{g'}(x^n) \rmd\mathbf{\Pi}(g')
    =
    \int
    p_{gh^{-1}g'}(x^n) \rmd\tilde{\mathbf{\Pi}}(g')
    =
    \int
    p_{g'}(hg^{-1}x^n) \rmd \tilde{\mathbf{\Pi}}(g').
  \end{equation*}
  Define $\tilde{p} = \int p_{g'} \rmd\tilde{\mathbf{\Pi}}(g')$. The two last displays
  together imply that
  \begin{equation*}
    \KL(\mathbf{Q}_{g}, \mathbf{\Pi}^{g'} \mathbf{P}_{g'})
    =
    \int
    q_{h}(hg^{-1}x^n)\ln\frac{q_{h}(hg^{-1}x^n)}{\tilde{p}(hg^{-1}x^n)}
    \rmd\nu(x^n).
  \end{equation*}
  After a change of variable and using the invariance of $\nu$, the right hand
  side of this equation equals
  $\KL(\mathbf{Q}_g, \tilde{\mathbf{\Pi}}^{g'}\mathbf{P}_{g'})$. Thus, this last
  equation is nothing but (\ref{eq:kl_is_quasi_invariant}), as was our
  objective. By our previous discusion, the result follows.
\end{proof}

\begin{proof}[Proof of  Proposition~\ref{prop:grow_also_av}]
  Let $g\in G$ be arbitrary but fixed. We start by showing that $T^{M_n}$ equals
  the likelihood ratio for $M^n = (M_1,\dots, M_n)$ between $\mathbf{P}_g$ and
  $\mathbf{Q}_g$. 
  For each $t>1$, the maximally invariant statistic at $n-1$,
  $M_{n-1} = m_{n-1}(X^{n-1})$ is invariant if seen as a function of $X^n$.
  Hence, by the maximality of $m_n$, $M_{n-1}$ can be written as a function of
  $M_n$. Repeating this reasoning $n-1$ times yields that $M_n$ contains all
  information about the value of $M^{n-1} = (M_1,\dots, M_{n-1})$, all the
  maximally invariant statistics at previous times. Two consequences fall from
  these observations. First, no additional information about $T^{M_n}$ is gained
  by knowing the value of $M^{n-1} = (M_1,\dots, M_{n-1})$ with respect to only
  knowing $M_{n-1}$, that is,
  $\mathbf{E}^{\mathbf{P}}_g\sqbrack{T^{M_n} | M_{n-1} } =
  \mathbf{E}^{\mathbf{P}}_g\sqbrack{T^{M_n} | M^{n-1}}$. Second, the likelihood
  ratio between $\mathbf{P}_g$ and $\mathbf{Q}_g$ for the sequence
  $M_1, \dots,M_n$ equals the likelihood ratio for $M_n$ alone, that is,
  \begin{equation*}
    T^{M_n}
    =
    \frac{
      q^{M_1, \dots, M_n}(m_1(X^1), \dots, m_n(X^n))
    }{
      p^{M_1, \dots, M_n}(m_1(X^1), \dots, m_n(X^n))
    }.
  \end{equation*}
  The previous two consequences, and a computation, together imply that $(T^{M_n})_{n\in \mathbb{N}}$ is
  an $M$-martingale under $\mathbf{P}_g$, that is,
  $\mathbf{E}^{\mathbf{P}}_g\sqbrack{T^{M_n} | M^{n-1}} = T^{M_{n-1}}$. Since
  $g\in G$ was arbitrary, the result follows.
\end{proof}

\begin{proof}[Proof of Proposition~\ref{prop:composite_invariant_grow}]
  Let $\mathbf{\Pi}_0^{g,\delta},\mathbf{\Pi}_1^{g,\delta}$ be two probability
  distributions on $G\times \Delta_0$ and $G\times\Delta_1$, respectively. If we
  call $\mathbf{\Pi}_0^{\delta}$ and $\mathbf{\Pi}_1^{\delta}$ their respective
  marginals on $\Delta_0$ and $\Delta_1$, then the information processing
  inequality implies that
  \begin{equation*}
    \KL(
    \mathbf{\Pi}_1^{g,\delta}\mathbf{Q}_{g,\delta},
    \mathbf{\Pi}_0^{g,\delta}\mathbf{P}_{g,\delta}
    )
    \geq
    \KL(
    \mathbf{\Pi}_1^{\delta}\mathbf{Q}_{\delta}^{M_n},
    \mathbf{\Pi}_0^{\delta}\mathbf{P}_{\delta}^{M_n}
    )
    \geq
    \KL(
    \mathbf{\Pi}_1^{\star\delta}\mathbf{Q}_{\delta}^{M_n},
    \mathbf{\Pi}_0^{\star\delta}\mathbf{P}_{\delta}^{M_n}
    ).
  \end{equation*}
  This means that the right-most member of the previous display is a lower bound
  on our target infimum, that is,
  \begin{equation}
    \label{eq:lwr_bd_inf_kl_composite}
    \inf_{\mathbf{\Pi}_0, \mathbf{\Pi}_1}
    \KL(\mathbf{\Pi}_1^{g,\delta}\mathbf{Q}_{g,\delta}
    \mathbf{\Pi}_0^{g,\delta}\mathbf{P}_{g,\delta}) \geq
    \KL(
    \mathbf{\Pi}_1^{\star\delta}\mathbf{Q}_{\delta}^{M_n},
    \mathbf{\Pi}_0^{\star\delta}\mathbf{P}_{\delta}^{M_n}
    ).
  \end{equation}
  To show that this is indeed an equality, it suffices to prove it when taking
  the infimum over a smaller subset of probability distributions
  $\mathbf{\Pi}_0, \mathbf{\Pi}_1$. We proceed to build such a subset. Let
  ${\cal P}(\mathbf{\Pi}_0^{\star \delta})$ be the set of probability
  distributions on $G\times\Delta_0$ with marginal distribution
  $\mathbf{\Pi}_0^{\star \delta}$. Define analogously the set of probability
  distributions ${\cal P}(\mathbf{\Pi}_1^{\star \delta})$ on $G\times\Delta_1$.
  By our assumptions, Theorem~\ref{theo:main_theorem} can be readily used to
  conclude that
  \begin{equation}\label{eq:marginal_kl_minimization}
    \inf_{
      (\mathbf{\Pi}_0, \mathbf{\Pi}_1)
      \in
      {\cal P}(\mathbf{\Pi}_0^{\star \delta})\times
      {\cal P}(\mathbf{\Pi}_1^{\star \delta})
    }
    \KL(\mathbf{\Pi}_1^{g,\delta}\mathbf{Q}_{g,\delta},
    \mathbf{\Pi}_0^{g,\delta}\mathbf{P}_{g,\delta})
    =
    \KL(\mathbf{\Pi}_1^{\star\delta}\mathbf{Q}_{\delta}^{M_n},
    \mathbf{\Pi}_0^{\star\delta}\mathbf{P}_{\delta}^{M_n})
  \end{equation}
  holds; (\ref{eq:lwr_bd_inf_kl_composite}) and
  (\ref{eq:marginal_kl_minimization}) together imply the result that we were
  after.
\end{proof}
\subsection{Proof of the main theorem, Theorem~\ref{theo:main_theorem}}
\label{sec:proof-main-theorem}
For the proof of the main result, we use an equivalent definition of amenability
to the one that was already anticipated in Section~\ref{sec:intro-group-invariance}.
We take the one that suits our purposes best \citep[see][p. 109,
Condition~$A_1$]{bondar_amenability_1981}.
That is, a group $G$ is amenable if there exists an increasing sequence of symmetric compact subsets $C_1\subseteq C_2,\dots \subset G$ such that, for any compact set
  $K\subseteq G$,
  \begin{equation*}
    \frac{\rho\{C_i\}}{\rho\{C_iK\}}\to 1, \ \ \text{ as $i\to \infty$.}
  \end{equation*}
In this formulation, amenability is the existence of \textit{almost invariant}
symmetric compact subsets of the group $G$. We use these sets to build a
sequence of \textit{almost invariant} probability measures when $G$ is
noncompact.

\begin{proof}[Proof of Theorem~\ref{theo:main_theorem}]
  Under our assumptions, Theorem 2 of \citet{bondar_borel_1976} implies the
  existence of a bimeasurable one-to-one map
  $r:{\cal X}^n\to G \times {\cal X}^n / G$ such that $r(x^n) = (h(x^n), m(x^n))$
  and $r(gx^n) = (gh(x^n), m(x^n))$ for $h(x^n)\in G$ and
  $m(x^n)\in {\cal X}^n / G$. 
  Hence, by a change of variables, we can take densities with respect to the
  image measure $\mu$ of $\nu$ under the map $r$ on $G \times {\cal X}^n / G$. 
  Call the random variables $M = m(X^n)$ and $H = h(X^n)$. We can therefore assume, without loss
  of generality, that the data is of the form $(H,M)$, that the group $G$ acts
  canonically by multiplication on the first component, and that the measures
  are with respect to a $G$-invariant measure $\mu = \lambda \times \beta$ where
  $\lambda$ is the Haar measure on $G$ and $\beta$ is some measure on
  ${\cal X}^n / G$ (see Remark~\ref{rem:dominated_models_left_invariant}). Note that rewriting the data in this way does not affect our objective because the $\KL$ divergence remains unchanged under bijective transformations of the data. For
  each $g\in G$, write $\mathbf{P}^{H|m}_g$ and $\mathbf{Q}^{H | m}_g$ for the
  conditional probabilities $\mathbf{P}^H_{g}\{\ \cdot \ | M = m\}$ and
  $\mathbf{Q}^H_g\{\ \cdot \ | M = m\}$, which can be obtained through
  disintegration \citep[see][]{chang_conditioning_1997}, and write
  $p_g( \ \cdot \ | m)$ and $q_g( \ \cdot \ | m)$ for their respective
  conditional densities with respect to the left Haar measure $\lambda$. 

  We turn to our $\KL$ minimization objective. The chain rule for the $\KL$
  divergence implies that, for any probability distribution $\mathbf{\Pi}$ on
  $G$,
  \begin{equation}\label{eq:kl_chain_rule}
    \KL(\mathbf{\Pi}^g\mathbf{Q}_g, \mathbf{\Pi}^g\mathbf{P}_g) =
    \KL(\mathbf{Q}^M, \mathbf{P}^M) +
    \int \KL(\mathbf{\Pi}^g\mathbf{Q}^{H | m}_{g},
      \mathbf{\Pi}^g\mathbf{P}^{H | m}_{g}) \rmd\mathbf{Q}^M(m).
  \end{equation}
  In order to prove our claim, we will build a sequence
  $\{\mathbf{\Pi}_i\}_{i\in\nats}$ of probability distributions on $G$ such that
  the term in (\ref{eq:kl_chain_rule}) pertaining the conditional distributions
  given $M$---the second term on the right hand side---goes to zero, that is,
  such that
  \begin{equation}\label{eq:objective_main_theo}
    \int \KL(\mathbf{\Pi}^g_i\mathbf{Q}^{H | m}_{g},
    \mathbf{\Pi}^g_i\mathbf{P}^{H | m}_{g})  \rmd\mathbf{Q}^M(m)
    \to
    0 \text{\ \  as  \ \ } i\to \infty.
  \end{equation}
  We define the distributions $\mathbf{\Pi}_i$ as the
  normalized restriction of the right Haar measure $\rho$ to carefully chosen
  compact sets $C_i\subset G$, that we describe in brief. In other words, for
  $B\subseteq G$ measurable, we define $\mathbf{\Pi}_i$ by
  \begin{equation}\label{eq:main_theo_measures_definition}
    \mathbf{\Pi}_i\{g\in B\} := \frac{\rho\{B\cap C_i\}}{\rho\{C_i\}},
  \end{equation}
  Next, the choice of compact sets $C_i$. For technical reasons that will become
  apparent later, we pick $C_i = J_iK_iL_i$, where $J_i$, $K_i$, and $L_i$ are
  increasing compact symmetric neighborhoods of the unity of $G$ with the growth
  condition that $C_i$ is not much bigger---measured by $\rho$--than $J_i$. More
  precisely, we choose $C_i$ according to the following lemma.
  \begin{lemma}\label{lem:prior_sequence_existence}
    Under the amenability of $G$ there exist
    sequences $\{J_i\}_{i\in \nats}$, $\{K_i\}_{i\in \nats}$ and
    $\{L_i\}_{i\in \nats}$ of compact symmetric neighborhoods of the unity of
    $G$, each increasing to cover $G$, such that
    \begin{equation*}
      \frac{\rho\{J_i\}}{\rho\{J_iK_iL_i\}} \to 1 \ \ \text{as $i\to \infty$.}
    \end{equation*}
  \end{lemma}
  The proof of this Lemma is given in Appendix~\ref{app:proof_tech_lemmas}.
  There is no risk of dividing by $\infty$ in
  \eqref{eq:main_theo_measures_definition}: by the continuity of the group
  operation each $C_i$ is compact, hence $\rho\{C_i\}<\infty$.
  Lemma~\ref{lem:prior_sequence_existence} ensures that
  $\mathbf{\Pi}_i\{g\in J_i\}\to 1$ as $i\to\infty$, a fact that will be
  useful later in the proof. Write
  $\mathbf{Q}_{i}^{H | m} : = \mathbf{\Pi}^g_i\mathbf{Q}^{H | m}_{g}$, and
  $\mathbf{P}_{i}^{H | m} : = \mathbf{\Pi}^g_i\mathbf{P}^{H | m}_{g}$, and
  $q_i(h|m)$ and $p_i(h|m)$ for their respective densities. We use a change of
  variable and split the integral in our quantity of interest from
  \eqref{eq:objective_main_theo}. To this end, notice that for any function
  $f = f(h,m)$, the expected value
  $\mathbf{E}_g^{\mathbf{Q}}[f(H,M)] = \mathbf{E}_1^{\mathbf{Q}}[f(gH,M)]$. 
  Indeed,
  \begin{align*}
    \iint
    f(h,m)
    q_g(h,m)
    \rmd \lambda(g)\rmd \beta(m)
    &=
    \iint
    f(h,m)
    q_1(g^{-1}h,m)
    \rmd \lambda(g)\rmd \beta(m)\\
    &=
    \iint
    f(gh,m)
    q_1(h,m)
    \rmd \lambda(g)\rmd \beta(m).
  \end{align*}
  Use this fact to obtain that
  \begin{align}
    \int
    \KL(\mathbf{\Pi}^g_i\mathbf{Q}^{H|m}_{g},
    \mathbf{\Pi}^g_i\mathbf{P}^{H|m}_{g})
    \rmd\mathbf{Q}(m)
     =
      \int
      \mathbf{E}^{\mathbf{Q}}_1\sqbrack{\ln\frac{q_i(gH|M)}{p_i(gH|M)}}
      \rmd\mathbf{\Pi}_i(g) 
    =  \label{eq:main_theo_objective_split} \\ \nonumber 
{\small         \underbrace{
          \int \mathbf{E}^{\mathbf{Q}}_1\sqbrack{\indicator{gH\in
              J_iK_i}\ln\frac{q_i(gH|M)}{p_i(gH|M)}} \rmd \mathbf{\Pi}_i(g)
          }_{\text{A}} + 
          \underbrace{
            \int
            \mathbf{E}^{\mathbf{Q}}_1\sqbrack{\indicator{gH\notin
                J_iK_i}\ln\frac{q_i(gH|M)}{p_i(gH|M)}}
            \rmd \mathbf{\Pi}_i(g)
          }_{\text{B}}.}
  \end{align}
  We separate the rest of the proof in two steps, one for bounding each term in
  \eqref{eq:main_theo_objective_split}. 
  These steps use two technical lemmas that we prove in Appendix~\ref{app:proof_tech_lemmas}.

  \textbf{Bound for A in \eqref{eq:main_theo_objective_split}:} Recall that
  \begin{equation*}
    \ln\frac{q_i(gh|m)}{p_i(gh|m)}
    =
    \ln
    \frac{
      \int \indicator{g'\in J_i K_i L_i}  q_{g'}(gh|m)  \rmd\rho(g')
    }{
      \int \indicator{g'\in J_i K_i L_i}  p_{g'}(gh|m)  \rmd\rho(g')
    }.
  \end{equation*}
  Use $N = J_iK_i$---not necessarily symmetric---and $L = L_i$ in the following
  lemma.
  \begin{lemma}\label{lem:first_term_bound}
    Let $N$ and $L$ be compact subsets of $G$. Assume that $L$ is symmetric.
    Then, for each $m\in {\cal X}^n / G$ it holds that
    \begin{equation*}
      \sup_{h'\in N}
      \bracks{
      \ln\frac{
        \int \indicator{g\in NL} \ q_{g}(h'|m)  \rmd \rho(g)
      }{
        \int  \indicator{g\in NL} \ p_{g}(h'|m)  \rmd \rho(g)
      }
      }
      \leq
      -\ln \mathbf{P}_{1}\{H\in L \ | \ M = m\}.
    \end{equation*}
  \end{lemma}
  With this lemma at hand, conclude that, for all $gh\in J_iK_i$, and
  $m\in \mathcal{M}$
  \begin{equation*}
    \ln\frac{q_i(gh|m)}{p_i(gh|m)}
    \leq
    -\ln \mathbf{P}_1\{H\in L_i \ |  \ M = m\}.
  \end{equation*}
  At the same time this implies that A in \eqref{eq:main_theo_objective_split}
  is smaller than
  \begin{equation*}
    -\int \ln \mathbf{P}_1\{H \in L_i \ | \ M = m\} \rmd\mathbf{Q}(m).
  \end{equation*}
  Since the sets $L_i$ were chosen to satisfy $L_i\uparrow G$, the probability
  $\mathbf{P}_1\{H\in L_i \ | \ M = m\}\to 1$ monotonically for each value of $m$.
  Consequently the quantity in last display tends to 0 by the monotone
  convergence theorem, and so does A in \eqref{eq:main_theo_objective_split}.
  This ends the first step of the proof. Now, we turn to the second term in
  \eqref{eq:main_theo_objective_split}.

  \textbf{Bound for B in \eqref{eq:main_theo_objective_split}:} Our strategy
  at this point is to show that, as $i\to\infty$,
  \begin{equation}\label{eq:main_theo_prob_zero_set}
    \int \mathbf{Q}_1\bracks{gH\notin J_iK_i} \rmd\mathbf{\Pi}_i(g) \to 0,
  \end{equation}
  and to use (\ref{eq:kl_moment_assumption}) to show our goal, that B in
  \eqref{eq:main_theo_objective_split} tends to zero. To show
  \eqref{eq:main_theo_prob_zero_set}, notice that if $g\in J_i$ and
  $h\in K_i$, then $gh\in J_iK_i$, which implies that
  \begin{equation*}
    \int \mathbf{Q}_1\bracks{gH\in J_iK_i} \rmd \mathbf{\Pi}_i(g)
    \geq
    \mathbf{\Pi}_i\bracks{g\in J_i}\mathbf{Q}_1\bracks{H\in K_i}.
  \end{equation*}
  Since the sets $K_i$ increase to cover $G$, we have
  $\mathbf{Q}_1\bracks{H\in K_i}\to 1$ as $i\to \infty$, and by our initial
  choice of sets $J_i,K_i,L_i$, the probability
  $\mathbf{\Pi}_i\bracks{g\in J_i}\to 1$, as $i\to\infty$. Hence
  \eqref{eq:main_theo_prob_zero_set} holds.
  To bound the second term, we use the following lemma with $\mathbf{\Pi} = \mathbf{\Pi}_i$.
  \begin{lemma}
    \label{lem:second_term_bound}
    Let $\mathbf{\Pi}$ be a distribution on $G$. Then, for each $h\in G$ and
    $m\in {\cal X}^n / G$, setting $\rmd\mathbf{\Pi}(g | h,m) =
    \frac{q_g(h|m)\rmd\mathbf{\Pi}(g)}{\int q_g(h|m)\rmd\mathbf{\Pi}(g) }$, it holds that
    \begin{equation*}
      \ln\frac{\int q_g(h|m)\rmd \mathbf{\Pi}(g)}{\int p_g(h|m)\rmd
        \mathbf{\Pi}(g)}
      \leq
      \int
      \ln\frac{q_{g}(h|m)}{p_{g}(h|m)}
      \rmd \mathbf{\Pi}(g | h, m ).
    \end{equation*}
  \end{lemma}
  After invoking the previous lemma, apply H\"older's and Jensen's inequality
  consecutively to bound B in \eqref{eq:main_theo_objective_split} by
\newcommand{\abbrev}{\; \ell \;}
  \begin{align}
    &\iint
      \sqbrack{\indicator{gh\notin
      J_iK_i}
      \int
      \abbrev(gh|m)
      \rmd\mathbf{\Pi}_i(g' | h, m)
      }
      \rmd\mathbf{Q}_1(h,m)
      \rmd\mathbf{\Pi}_i(g)
      \leq
      \label{eq:main_theo_holder_result}
      \\
    &  {\small
    \underbrace{\paren{
          \int \mathbf{Q}_1\bracks{gH\notin J_iK_i} \rmd\mathbf{\Pi}_i(g)
        }^{1/q}}_{\to 0 \text{ as } i\to\infty \text{ by
          (\ref{eq:main_theo_prob_zero_set})}}
      \paren{
        \iint
        \abs{
          \int
          \abbrev(gh|m)
          \rmd\mathbf{\Pi}_i(g'|h,m)
        }^{p}
        \rmd\mathbf{Q}_1(h,m)
        \rmd \mathbf{\Pi}_i(g)
      }^{1/p}
   }
    \nonumber
  \end{align}
  where here and in the sequel, $\abbrev(gh|m)$ abbreviates
  $\ln\frac{q_{g'}(gh|m)}{p_{g'}(gh|m)}$, and   
  $p = 1 + \varepsilon$ and $q$ is $p$'s H\"older conjugate, that is,
  $1/p + 1/q = 1$. Next, we show that the second factor on the right of 
  (\ref{eq:main_theo_holder_result}) remains bounded as $i\to\infty$. By
  Jensen's inequality, this quantity is smaller than
  \begin{equation*}
    \paren{
      \iiint
      \abs{
     \abbrev(gh|m)
      }^{p}
      \rmd \mathbf{\Pi}_i(g' | h,m)
      \rmd \mathbf{Q}_1(h,m)
      \rmd \mathbf{\Pi}_i(g)
    }^{1/p}.
  \end{equation*}
  After a series of rewritings and using our
  Assumption~(\ref{eq:kl_moment_assumption}), we will show that this quantity is
  bounded. First, we 
  deduce that
  \begin{multline*}
    \iint
    \abs{
    \abbrev(gh|m) 
    }^{p}
    \rmd \mathbf{\Pi}_i(g' | h,m)
    \rmd \mathbf{Q}_1(h,m)
    \rmd \mathbf{\Pi}_i(g)
    =
    \\
    \iint
    \abs{ 
    \abbrev(h|m) 
    }^{p}
    \rmd \mathbf{\Pi}_i(g' | h,m)
    \rmd \mathbf{Q}_g(h,m)
    \rmd \mathbf{\Pi}_i(g)
    = \\
    \iint
    \abs{
    \abbrev(h|m) 
    }^{p}
    \rmd \mathbf{\Pi}_i(g' | h,m)
    \rmd \mathbf{Q}_i(h,m)
    =
    \mathbf{E}_1^{\mathbf{Q}}
    \abs{\ln\frac{q_{1}(H|M)}{p_{1}(H|M)}}^{p},
 \end{multline*}
where we used again the change of variable that we used to obtain
  (\ref{eq:main_theo_objective_split})---but now in the opposite direction---and in the final equality, we used 
  Bayes' theorem.
\commentout{  \begin{equation*}
    \iint
    \abs{
    \abbrev 
    }^{p}
    \rmd \mathbf{Q}_{g'}(h,m)
    \rmd \mathbf{\Pi}_i(g')
    =
    \mathbf{E}_1^{\mathbf{Q}}
    \abs{\ln\frac{q_{1}(H|M)}{p_{1}(H|M)}}^{p}
  \end{equation*}}
  Hence, as
    \begin{align*}
      \paren{\mathbf{E}^{\mathbf{Q}}_1\sqbrack{\abs{\ln\frac{q_1(H|M)}{p_1(H|M)}}^p}}^{1/p}
      &\leq     
      \paren{\mathbf{E}^{\mathbf{Q}}_1\sqbrack{\abs{\ln\frac{q_1(H,M)}{p_1(H,M)}}^p}}^{1/p}
      +
      \paren{\mathbf{E}^{\mathbf{Q}}_1\sqbrack{\abs{\ln\frac{q_1(M)}{p_1(M)}}}^p}^{1/p} \\
      &<\infty     
   \end{align*}
  by (\ref{eq:kl_moment_assumption}), we have shown that
  \eqref{eq:main_theo_holder_result} tends to 0 as $i\to\infty$ and that
  consequently B in \eqref{eq:main_theo_objective_split} tends to $0$ in the
  same limit.

  After completing these two steps, we have shown that both A and B in
  \eqref{eq:main_theo_objective_split} tend to 0 as $i\to \infty$, and that
  consequently the claim of the theorem follows. All is left is to prove lemmas
  \ref{lem:prior_sequence_existence},  \ref{lem:first_term_bound}, and \ref{lem:second_term_bound}. The proofs being straightforward but tedious, we delegated these to Appendix~\ref{app:furtherproofs}.
\end{proof}

\DeclareRobustCommand{\VANDER}[3]{#3}
\section{Acknowledgements}
Peter D. Gr\"unwald is also affiliated with the Mathematical Institute of Leiden University. This work is part of the research program with project number 617.001.651, which is  financed by the Dutch Research Council (NWO). We thank Wouter Koolen for useful conversations and for inspiring the example in Appendix~\ref{app:filtration_counterexample}.
\bibliographystyle{imsart-nameyear}
\bibliography{haarbib,haarbib_muriel,haarbib_tyron,haarbib_peter}

\newpage

\appendix
\section{Invariance and Sufficiency}\label{app:sufficiency}

The relationship between invariance and sufficiency has been thoroughly
investigated
\citep{hall_relationship_1965,hall_correction_1995,berk_note_1972,nogales_remarks_1996}.
Consider a $G$-invariant hypothesis testing problem such that a sufficient
statistic is available. If the action of $G$ on the original data space induces
a free action on the sufficient statistic, there must be a maximally invariant
function of the sufficient statistic. With this structure in mind, the results
presented thus far suggest two approaches for solving the hypothesis testing
problem. The first is to reduce the data using the sufficient statistic, and to
test the problem using the maximally invariant function of the sufficient
statistic. The second approach is to use the maximally invariant function of the
original data. These two approaches yield two potentially different
growth-optimal $\E$-statistics, and one question arises naturally: are both
approaches equivalent? In this section we show that this is indeed the case,
under certain conditions.

We now introduce the setup formally. At the end of this section we revisit our
guiding example, the t-test, and show how the results of this section apply to
it. Let $\Theta$ be the parameter space, and let $\delta = \delta(\theta)$ be a
maximally invariant function of $\theta$ for the action of $G$ on $\Theta$. Let
$s_n:{\cal X}^n\to {\cal S}_n$ be a sufficient statistic for
$\theta\in\Theta$. 
Consider again the hypothesis testing problem in the form presented in
(\ref{eq:target_hypothesis_problem}). Assume further that $G$ acts freely and
continuously on the image space ${\cal S}_n$ of the sufficient statistic
$S_n = s_n(X^n)$, and assume that $s_n$ is compatible with the action of $G$ in
the sense that, for any $X^n\in {\cal X}^n$ and any $g\in G$, the identity
$gs_n(X^n) = s_n(gX^n)$ holds, where $(g,s)\mapsto gs$ makes reference to the
action of $G$ on ${\cal S}_n$. Let $M_{{\cal X},n} = m_{{\cal X},n}(X^n)$ and
$M_{{\cal S},n} = m_{{\cal S},n}(S_n)$ be two maximally invariant functions for
the actions of $G$ on ${\cal X}^n$ and ${\cal S}_n$, respectively. Because of
their invariance, the distributions of $M_{{\cal X},n}$ and $M_{{\cal S},n}$
depend only on the maximally invariant parameter $\delta$. \citet[][Section
II.3]{hall_relationship_1965} proved that, under regularity conditions, if
$S_{{\cal X},n} = s_{{\cal X},n}(X^n)$ is sufficient for $\theta\in \Theta$,
then the statistic $M_{{\cal S},n} = m_{{\cal S},n}(s_n(X^n))$ is sufficient for
$\delta$. In that case, we call $M_{{\cal S},n}$ invariantly sufficient. Here we
state the version of their result, attributed by \citet{hall_relationship_1965}
to C.\ Stein, that suits best our purposes\footnote{The assumption that there
  exists an invariant measure on $G$ implies what \citet{hall_relationship_1965}
  call Assumption A \citep[see][discussion in p. 581]{hall_relationship_1965}}.

\begin{theorem}[C.\ Stein]\label{theo:stein_invariantly_sufficient}
  If there exists a Haar measure on the group $G$, the statistic
  $M_{{\cal S},n} = m_{{\cal S},n}(s_n(X^n))$ is invariantly sufficient, that
  is, it is sufficient for the maximally invariant parameter $\delta$.
\end{theorem}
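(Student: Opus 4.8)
The plan is to realise $M_{{\cal S},n}$ as a measurable image of $M_{{\cal X},n}$ and to show that the conditional law of $M_{{\cal X},n}$ given $M_{{\cal S},n}$ can be chosen free of the parameter; here ``sufficient for $\delta$'' is read in the reduced experiment in which one observes the maximal invariant $M_{{\cal X},n}$, whose distribution depends on $\theta$ only through $\delta$ by invariance of the model.

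First I would produce a \emph{$G$-equivariant} version of the conditional distribution of $X^n$ given $S_n$. Since $S_n=s_n(X^n)$ is sufficient for $\theta$ and (by the standing regularity recalled in Section~\ref{sec:notation}) the relevant regular conditional distributions exist, there is a kernel $K(\,\cdot\mid s)$ from ${\cal S}_n$ to ${\cal X}^n$, not depending on $\theta$, with $\mathbf{P}_\theta(B)=\int K(B\mid s)\,\mathbf{P}_\theta^{S_n}(\rmd s)$. The task is to choose $K$ so that $K(gB\mid gs)=K(B\mid s)$ for \emph{every} $g\in G$, every $s\in{\cal S}_n$ and every measurable $B$. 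This is precisely Stein's lemma as reported by \citet[Section~II.3]{hall_relationship_1965}, and the existence of a Haar measure on $G$ is what supplies their Assumption~A (cf.\ the footnote): invariance of the model together with essential uniqueness of conditional distributions yields equivariance only up to $\mathbf{P}_\theta^{S_n}$-null sets that could a priori depend on $\theta$ and fail to be $G$-invariant, so a genuine selection argument, powered by the invariant measure, is required. I would invoke \citet{hall_relationship_1965} for this step rather than reprove it.

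Given the equivariant $K$, the remainder is bookkeeping. Because $s_n$ is equivariant and $m_{{\cal S},n}$ is $G$-invariant, $x^n\mapsto m_{{\cal S},n}(s_n(x^n))$ is a $G$-invariant function of $x^n$, so by maximality of $M_{{\cal X},n}$ there is a measurable $\phi$ with $m_{{\cal S},n}(s_n(x^n))=\phi(m_{{\cal X},n}(x^n))$ (measurability of $\phi$ comes from the bimeasurable orbit-space structure of Remark~\ref{rmk:factorization}). Push $K$ forward to the kernel $K_M(A\mid s):=K(\{x^n:m_{{\cal X},n}(x^n)\in A\}\mid s)$ from ${\cal S}_n$ to ${\cal X}^n/G$. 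Since the set $\{m_{{\cal X},n}\in A\}$ is $G$-invariant, equivariance of $K$ gives $K_M(A\mid gs)=K_M(A\mid s)$, so $s\mapsto K_M(A\mid s)$ is $G$-invariant and hence factors as $K_M(A\mid s)=\bar K_M(A\mid m_{{\cal S},n}(s))$ for a kernel $\bar K_M$ from ${\cal S}_n/G$ to ${\cal X}^n/G$; and since $K(\,\cdot\mid s)$ is concentrated on $\{x^n:s_n(x^n)=s\}$, the kernel $\bar K_M(\,\cdot\mid w)$ is concentrated on $\phi^{-1}(w)$.

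Finally, for any $\theta$ with $\delta(\theta)=\delta$, applying $\mathbf{P}_\theta(B)=\int K(B\mid s)\,\mathbf{P}_\theta^{S_n}(\rmd s)$ with $B=\{m_{{\cal X},n}\in A\}$ gives
\[
  \mathbf{P}_\theta^{M_{{\cal X},n}}(A)
  =\int_{{\cal S}_n} K_M(A\mid s)\,\mathbf{P}_\theta^{S_n}(\rmd s)
  =\int_{{\cal S}_n/G}\bar K_M(A\mid w)\,\mathbf{P}_\theta^{M_{{\cal S},n}}(\rmd w),
\]
and by invariance of the models $\mathbf{P}_\theta^{M_{{\cal S},n}}$ depends on $\theta$ only through $\delta$ while $\bar K_M$ does not depend on $\theta$ at all. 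Since moreover $\bar K_M(\,\cdot\mid w)$ lives on $\phi^{-1}(w)$, essential uniqueness of disintegrations identifies it with a version of the conditional law of $M_{{\cal X},n}$ given $M_{{\cal S},n}=w$; being parameter-free, this is exactly the assertion that $M_{{\cal S},n}$ is sufficient for $\delta$. I expect Step~1 — obtaining an honestly equivariant conditional distribution rather than an almost-everywhere one — to be the only real obstacle, and it is precisely what the Haar-measure hypothesis is there to resolve.
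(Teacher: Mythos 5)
The paper does not actually prove this statement: Theorem~\ref{theo:stein_invariantly_sufficient} is imported wholesale from \citet[Section~II.3]{hall_relationship_1965}, who attribute it to C.~Stein, and the only commentary the paper adds is the footnote observing that the existence of a Haar measure on $G$ implies their Assumption~A. Measured against that, your proposal is not in conflict with anything in the paper --- it is a faithful reconstruction of the classical argument, and it correctly locates the crux exactly where the paper's footnote does: one needs an \emph{everywhere} $G$-equivariant version of the conditional distribution of $X^n$ given $S_n$, not merely one that is equivariant up to $\theta$-dependent null sets, and this selection is what the invariant measure on $G$ buys. Your subsequent reduction (pushing the equivariant kernel forward to the orbit space, noting that $\{m_{{\cal X},n}\in A\}$ is a $G$-invariant set so the pushed-forward kernel is an invariant function of $s$ and hence factors through $m_{{\cal S},n}$, then identifying the resulting parameter-free kernel with the conditional law of $M_{{\cal X},n}$ given $M_{{\cal S},n}$ by essential uniqueness of disintegrations) is sound, and your reading of ``sufficient for $\delta$'' as sufficiency in the reduced experiment of observing the maximal invariant is exactly how the paper uses the theorem in the proof of Proposition~\ref{prop:max_inv_suff_kl_equivalence}. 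The only caveat is that, as a self-contained proof, your Step~1 is deferred to the same external reference the paper cites, so what you have written is a correct scaffold around the one genuinely hard lemma rather than a complete proof of it; since the paper itself supplies zero proof, this is more, not less, than the paper provides. Minor loose ends worth acknowledging if you were to write this out in full: measurability of the factorizations through the orbit spaces (which you correctly route through the structure of Remark~\ref{rmk:factorization}, applied also to ${\cal S}_n$), and the fact that $K(\cdot\mid s)$ can be chosen to be concentrated on the fiber $\{s_n = s\}$ for every $s$ rather than almost every $s$.
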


With this theorem at hand, and the fact that the $\KL$ divergence does not
decrease by the application of sufficient transformations, we obtain the
following proposition.
\begin{proposition}\label{prop:max_inv_suff_kl_equivalence}
  Let $s_n:{\cal X}^n\to {\cal S}_n$ be sufficient statistic for
  $\theta\in\Theta$. Assume that $G$ acts freely on ${\cal S}_n$ and that
  $s_n(gX^n) = gs_n(x^n)$ for all $X^n\in{\cal X}^n$ and $g\in G$. Let
  $m_{{\cal S},n}$ be a maximal invariant for the action of $G$ on ${\cal S}_n$,
  and let $M_{{\cal S},n} = m_{{\cal S},n}(s_n(X^n))$. Then,
  \begin{equation*}
    \KL\paren{\mathbf{P}_{\delta_1}^{M_{{\cal X},n}}, \mathbf{P}_{\delta_0}^{M_{{\cal X},n}}}
    =
    \KL\paren{\mathbf{P}_{\delta_1}^{M_{{\cal S},n}},
      \mathbf{P}_{\delta_0}^{M_{{\cal S},n}}}.
  \end{equation*}
\end{proposition}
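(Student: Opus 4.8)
The plan is to realize $M_{{\cal S},n}$ as a measurable function of the maximal invariant $M_{{\cal X},n}$ that is, moreover, sufficient for $\delta$ within the (two-point) model generated by $M_{{\cal X},n}$; the claimed identity then follows from the two standard properties of the $\KL$ divergence, namely that it never increases under a deterministic or randomized transformation and never decreases under a sufficient one.

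First I would check that $M_{{\cal S},n}=m_{{\cal S},n}(s_n(X^n))$ is a $G$-invariant function of $X^n$: using the equivariance of $s_n$ and the invariance of $m_{{\cal S},n}$, $m_{{\cal S},n}(s_n(gX^n))=m_{{\cal S},n}(g\,s_n(X^n))=m_{{\cal S},n}(s_n(X^n))$. Since $M_{{\cal X},n}=m_{{\cal X},n}(X^n)$ is a maximal invariant, every $G$-invariant function of $X^n$ is constant on the $G$-orbits and hence factors through $m_{{\cal X},n}$; under the standing topological assumptions this factorization can be taken measurable (this is exactly the type of Borel factorization statement underlying Remark~\ref{rmk:factorization}), so there is a measurable $\phi$ with $M_{{\cal S},n}=\phi(M_{{\cal X},n})$. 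Because $\phi$ is a deterministic measurable map, the data-processing inequality for $\KL$ immediately gives
\begin{equation*}
  \KL\paren{\mathbf{P}_{\delta_1}^{M_{{\cal S},n}},\mathbf{P}_{\delta_0}^{M_{{\cal S},n}}}
  \le
  \KL\paren{\mathbf{P}_{\delta_1}^{M_{{\cal X},n}},\mathbf{P}_{\delta_0}^{M_{{\cal X},n}}}.
\end{equation*}

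For the reverse inequality I would invoke Theorem~\ref{theo:stein_invariantly_sufficient}: since $G$ carries a Haar measure, $M_{{\cal S},n}$ is invariantly sufficient, i.e.\ sufficient for $\delta$ in the full model $\{\mathbf{P}_\theta\}$, so a regular conditional distribution of $X^n$ given $M_{{\cal S},n}$ exists and does not depend on $\delta$. Pushing this conditional law forward through $m_{{\cal X},n}$ yields a regular conditional law of $M_{{\cal X},n}$ given $M_{{\cal S},n}$ that is still $\delta$-free; equivalently, $\phi$ is sufficient for $\delta$ in the model $\{\mathbf{P}_\delta^{M_{{\cal X},n}}:\delta\in\{\delta_0,\delta_1\}\}$. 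Composing $\mathbf{P}_\delta^{M_{{\cal S},n}}$ with this $\delta$-free kernel recovers $\mathbf{P}_\delta^{M_{{\cal X},n}}$, so a second application of the data-processing inequality (now to a channel) gives
\begin{equation*}
  \KL\paren{\mathbf{P}_{\delta_1}^{M_{{\cal X},n}},\mathbf{P}_{\delta_0}^{M_{{\cal X},n}}}
  \le
  \KL\paren{\mathbf{P}_{\delta_1}^{M_{{\cal S},n}},\mathbf{P}_{\delta_0}^{M_{{\cal S},n}}},
\end{equation*}
and combining the two displays proves the equality, both sides being interpreted in $[0,\infty]$.

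I expect the measure-theoretic bookkeeping to be the only real obstacle. The two places that need care are the measurability of the factorization $\phi$ in the first step --- which I would not re-derive but hand off to the Borel factorization results already cited in the paper (Remark~\ref{rmk:factorization} and the references therein) --- and the passage from ``$M_{{\cal S},n}$ sufficient for $\delta$'' (Stein) to ``$\phi$ sufficient for $\delta$ after the invariance reduction'', which requires a genuine $\delta$-independent regular conditional distribution of $X^n$, and hence of $M_{{\cal X},n}$, given $M_{{\cal S},n}$; this is where the paper's standing assumption that the relevant conditional distributions exist is used. All the $\KL$ inequalities themselves are standard.
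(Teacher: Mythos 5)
Your proposal is correct and follows essentially the same route as the paper's proof: establish that $M_{{\cal S},n}$ is $G$-invariant and hence factors through the maximal invariant $M_{{\cal X},n}$, invoke Stein's theorem to get that the factoring map is a sufficient transformation, and conclude by the invariance of $\KL$ under sufficient transformations. You merely unpack that last step into its two constituent data-processing inequalities and attend to the measurability bookkeeping, which the paper leaves implicit.
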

\begin{proof}
  The function $M_{{\cal S},n} = m_{{\cal S},n}(s_n(X^n))$ is invariant, and
  consequently its distribution only depends on the maximally invariant parameter
  $\delta$. Since $M_{{\cal X},n}$ is maximally invariant for the action of $G$
  on ${\cal X}^n$, there is a function $f$ such that
  $M_{{\cal S},n} = f(M_{{\cal X},n})$. By Stein's theorem,
  Theorem~\ref{theo:stein_invariantly_sufficient}, $M_{{\cal S},n}$ is
  sufficient for $\delta$. Consequently, $f$ is a sufficient transformation.
  Hence, from the invariance of the $\KL$ divergence under sufficient
  transformations, the result follows.
\end{proof}
Via the factorization theorem of Fisher and Neyman, the likelihood ratio for the
maximal invariant $M_{{\cal X},n}$ coincides with that of the invariantly
sufficient $M_{{\cal S},n}$. As a consequence, we obtain the answer to the
motivating question of this section: performing an invariance reduction on the
original data and on the sufficient statistic are equivalent. 
\begin{corollary}\label{cor:e-statistic-sufficiency}
  Under the assumptions of Proposition~\ref{prop:max_inv_suff_kl_equivalence},
  if $S_n = s_n(X^n)$,
  \begin{equation*}
    \frac{q^{M_{{\cal X},n}}(m_{{\cal X},n}(X^n))}{
      p^{M_{{\cal X},n}}(m_{{\cal X},n}(X^n))}
    = \frac{q^{M_{{\cal S},n}}(m_{{\cal S},n}(S_n))}{
      p^{M_{{\cal S},n}}(m_{{\cal S},n}(S_n))}.
  \end{equation*}
  Hence, if assumptions of Corollary~\ref{cor:main_corollary} also hold, the
  likelihood ratio for the invariantly sufficient statistic $M_{{\cal S},n}$ is
  (relatively) GROW.
\end{corollary}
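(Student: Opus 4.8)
The plan is to obtain the displayed identity between the two likelihood ratios from the Fisher--Neyman factorization theorem, and then to transfer the (relative) GROW property across that identity using Corollary~\ref{cor:main_corollary} and Corollary~\ref{cor:also_regrow}. For the identity, I would reuse the structural observation already made inside the proof of Proposition~\ref{prop:max_inv_suff_kl_equivalence}: the statistic $M_{{\cal S},n}$ is $G$-invariant as a function of $X^n$, since $m_{{\cal S},n}(s_n(gX^n)) = m_{{\cal S},n}(g\,s_n(X^n)) = m_{{\cal S},n}(s_n(X^n))$ by the assumed compatibility of $s_n$ with the action and the invariance of $m_{{\cal S},n}$. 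Maximality of $M_{{\cal X},n}$ then yields a measurable $f$ with $M_{{\cal S},n} = f(M_{{\cal X},n})$, and Stein's theorem (Theorem~\ref{theo:stein_invariantly_sufficient}, applicable since a Haar measure on $G$ exists under Assumption~\ref{ass:summary_topological}) makes $M_{{\cal S},n}$ sufficient for $\delta$; hence $f$ is a sufficient transformation for the family $\{\mathbf{P}^{M_{{\cal X},n}}_\delta\}$.

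Next I would apply the factorization theorem to that family: choosing dominating measures compatibly, one may write $p^{M_{{\cal X},n}}_\delta(m) = a_\delta(f(m))\,b(m)$ and correspondingly $p^{M_{{\cal S},n}}_\delta(t) = a_\delta(t)\,\tilde b(t)$, with $b$ and $\tilde b$ free of $\delta$. Forming the ratio at $\delta_1$ versus $\delta_0$ cancels $b$ and $\tilde b$, so both sides of the claimed identity reduce to $a_{\delta_1}/a_{\delta_0}$ evaluated at $f(m_{{\cal X},n}(X^n)) = m_{{\cal S},n}(s_n(X^n)) = m_{{\cal S},n}(S_n)$; this gives the equality $\mathbf{P}_{\delta_0}$- and $\mathbf{P}_{\delta_1}$-almost surely, which is the relevant sense for $\E$-statistics. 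With the identity in hand the second claim is immediate: under the assumptions of Corollary~\ref{cor:main_corollary}, that corollary gives that $T^*_n = p^{M_{{\cal X},n}}_{\delta_1}/p^{M_{{\cal X},n}}_{\delta_0}$ is GROW, Corollary~\ref{cor:also_regrow} upgrades this to relatively GROW, and the identity transfers the property to the likelihood ratio for $M_{{\cal S},n}$. Note that one does \emph{not} need $M_{{\cal S},n}$ itself to be maximally invariant on ${\cal X}^n$; everything is routed through $T^*_n$, which is built from the genuine maximal invariant $M_{{\cal X},n}$.

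The main obstacle I anticipate is purely measure-theoretic bookkeeping: ensuring the pushforward density $p^{M_{{\cal S},n}}_\delta$ is taken with respect to a measure compatible with the one used for $p^{M_{{\cal X},n}}_\delta$ so the factorization passes cleanly from the full invariant to its image under $f$, and checking that ``$f$ is a sufficient transformation'' is invoked in exactly the form that licenses the factored expression for $p^{M_{{\cal X},n}}_\delta$. Once that is pinned down, nothing beyond citing Theorem~\ref{theo:stein_invariantly_sufficient}, Corollary~\ref{cor:main_corollary}, and Corollary~\ref{cor:also_regrow} is needed.
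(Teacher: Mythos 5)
Your proposal is correct and follows essentially the same route as the paper: the paper also obtains the identity by noting that $M_{{\cal S},n}=f(M_{{\cal X},n})$ is a sufficient transformation (via Stein's theorem, Theorem~\ref{theo:stein_invariantly_sufficient}) and invoking the Fisher--Neyman factorization so that the $\delta$-free factors cancel in the likelihood ratio, after which Corollary~\ref{cor:main_corollary} and Corollary~\ref{cor:also_regrow} transfer the (relatively) GROW property. Your additional care about compatible dominating measures only makes explicit what the paper leaves implicit.
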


\begin{example}[continues=ex:t-test]
  We have seen that a maximally invariant function of the data is
  $M_{{\cal X},n} = m_{{\cal X},n}(X^n) = \paren{X_1/\abs{X_1}, \dots, X_n /
    \abs{X_1}}$ while the t-statistic
  $M_{{\cal S},n} = m_{{\cal S},n}(X^n)\propto \hat{\mu}_n / \hat{\sigma}_n$ is
  a maximally invariant function of the sufficient statistic
  $s_n(X^n) = \paren{\hat{\mu}_n,\hat{\sigma}_n}$. Stein's theorem
  (Theorem~\ref{theo:stein_invariantly_sufficient}) shows that the t-statistic
  $M_{{\cal S},n} $ is sufficient for the maximally invariant parameter
  $\delta = \mu/\sigma$. Corollary~\ref{cor:e-statistic-sufficiency} shows that
  the likelihood ratio for the t-statistic is relatively GROW.
\end{example}

\section{Detailed comparison to Sun and Berger (2007) and Liang and Barron (2004): two families vs.\ one}
\label{app:herecomesthesun}
As the  example in Section~\ref{sec:ameneyourself}  illustrates, it is sometimes possible to represent the same
$\mathcal{H}_0$ and $\mathcal{H}_1$ via (at least) two different groups, say
$G_a$ and $G_b$.
Group $G_a$ is combined with parameter of interest in some
space $\Delta_a$ and priors ${\mathbf \Pi}_j^{*\delta_a}$ on $\Delta_a$
achieving (\ref{eq:superpair}) relative to group $G_a$, for $j = 0,1$; group
$G_b$ has parameter of interest in $\Delta_b$ and priors
${\mathbf \Pi}_j^{*\delta_b}$ achieving (\ref{eq:superpair}) relative to group
$G_b$; yet the tuples
${\cal T}_a = (G_a,\Delta_a,\{{\mathbf \Pi}_j^{*\delta_a} \}_{j = 0,1})$ and
${\cal T}_b = (G_b,\Delta_b,\{{\mathbf \Pi}_j^{*\delta_b} \}_{j = 0,1})$ define
the same hypotheses $\mathcal{H}_0$ and $\mathcal{H}_1$. That is, the set of
distributions $\{ {\mathbf P}^*_g \}_{g \in G_a}$ obtained by applying
Proposition~\ref{prop:composite_invariant_grow} with group $G_a$ (representing
$\mathcal{H}_0$ defined relative to group $G_a$) coincides with the set of
distributions $\{ {\mathbf P}^*_g \}_{g \in G_b}$ obtained by applying
Proposition~\ref{prop:composite_invariant_grow} with group $G_b$ (representing
$\mathcal{H}_0$ defined relative to group $G_b$); and analogously for the set of
distributions $\{ {\mathbf P}^*_g \}_{g \in G_a}$ and the set of distributions
$\{ {\mathbf P}^*_g \}_{g \in G_b}$. In the example, $G_a$ was $\GL(d)$
and the priors ${\mathbf \Pi}_0^{*\delta_a}, {\mathbf \Pi}_1^{*\delta_a}$ were
degenerate priors on $0$ and $\gamma$ as in (\ref{eq:hotelling-test-problem}),
respectively; $G_b$ was the lower triangular group with a specific prior as
indicated in the example. In such a case with multiple representations of the same
${\cal H}_0$ and ${\cal H}_1$, using the fact that the notion of "GROW" does not
refer to the underlying group, Corollary~\ref{cor:composite} can be used to
identify the GROW $\E$-statistic as soon as the assumptions of
Proposition~\ref{prop:composite_invariant_grow} hold for at least one of the
tuples ${\cal T}_a$ or ${\cal T}_b$. Namely, if the assumptions hold for just
one of the two tuples, we use Corollary~\ref{cor:composite} with that tuple;
then $T^*$ as defined in the corollary must be GROW, irrespective of
whether $T^*$ based on the other tuple is the same (as it was in the
example above) or different. If the assumptions hold for {\em both\/} groups,
then, using the fact that the GROW $\E$-statistic is essentially unique (see
Theorem~1 of \citetalias{grunwald_safe_2023} for definition and proof), it follows that $T^*(X^n)$ as
defined in Corollary~\ref{cor:composite} must coincide for both tuples.

Superficially, this may seem to contradict \citet{sun_objective_2007} who point
out that in some settings, the right Haar prior is not uniquely defined, and
different choices for right Haar prior give different posteriors. To resolve the paradox, note that, whereas we
always formulate two models $\mathcal{H}_0$ and $\mathcal{H}_1$,
\citet{sun_objective_2007} start with a single probabilistic model, say
$\mathcal{P}$, that can be written as in (\ref{eq:def_invariant_model}) for some
group $G$. Their example shows that the same $\mathcal{P}$ can sometimes arise
from two different groups, and then it is not clear what group, and hence what Haar
prior to pick, and their quantity of interest, the Bayesian posterior, can depend on the choice. 

In contrast, our
quantity of interest, the GROW $\E$-statistic ${T}^*_n$, is uniquely defined as soon as there exists one group $G$ with
${\cal H}_0$ and ${\cal H}_1$ as in (\ref{eq:target_hypothesis_problem}) for
which the assumptions of Theorem~\ref{theo:main_theorem} hold; or more
generally, as soon as there exists one tuple
${\cal T}= (G,\Delta,\{{\mathbf \Pi}_j^{*\delta} \}_{j = 0,1})$ for which the
assumptions of Proposition~\ref{prop:composite_invariant_grow} hold, even if
there exist other such tuples.

To reconcile uniqueness of the GROW $\E$-statistic ${T}^*_n$ with nonuniqueness of the Bayes posterior, note that the former 
is a ratio between Bayes
marginals for different models ${\cal H}_0$ and ${\cal H}_1$ at the same sample
size $n$. In contrast, the Bayes predictive distribution based on a single model $\mathcal{P}$ is
a ratio between Bayes marginals for the same $\mathcal{P}$ at different
sample sizes $n$ and $n-1$. The role of `same' and `different' being interchanged, it turns out that this Bayes predictive distribution {\em can\/}  depend on the group on which the right Haar prior for $\mathcal{P}$ is based. Since the Bayes predictive distribution can be rewritten as a marginal over the Bayes posterior for $\mathcal{P}$, it is then not surprising that this Bayes posterior may also change if the underlying group is changed.

The consideration of two families $\mathcal{H}_0$ and $\mathcal{H}_1$ vs.\ a
single $\mathcal{P}$ is also one of the main differences between our setting and
the one of \citet{liang2004exact}, who provide exact min-max procedures for
predictive density estimation for general location and scale families under
Kullback-Leibler loss. Their results apply to any invariant probabilistic model
${\mathcal P}$ as in (\ref{eq:def_invariant_model}) where the invariance is with
respect to location or scale (and more generally, with respect to some other
groups including the subset of the affine group that we consider in
Section~\ref{sec:regression}). Consider then such a ${\cal P}$ and let
$p^{M_n}(m_n(X^n))$ be as in (\ref{eq:papi_wijsman}). As is well-known, provided
that $n' $ is larger than some minimum value, for all $n > n'$,
$r(X_{n'+1}, \ldots, X_{n} \mid X_1, \ldots, X_{n'}) :=
p^{M_n}(m_n(X^n))/p^{M_{n'}}(m_{n'}(X^{n'}))$ defines a conditional probability
density for $X_{n'+1}, \ldots, X_n$; this is a consequence of the formal-Bayes
posterior corresponding to the right Haar prior becoming proper after $n'$
observations, a.s. under all ${\bf P} \in {\cal P}$. For example, in the t-test
setting, $n'=1$. \citet{liang2004exact} show that the distribution corresponding
to $r$ minimizes the ${\bf P}^{n'}$- expected KL divergence to the conditional
distribution ${\bf P}^{n} \mid X^{n'}$, in the worst case over all
${\bf P} \in {\cal P}$. Even though their optimal density $r$ is defined in
terms of the same quantities as our optimal statistic $T^*_n$, it is, just as
\citet{berger_objective_2008}, considered above, a ratio between likelihoods for
the same model at different sample sizes, rather than, as in our setting,
between likelihoods for different models, both composite, at the same sample
sizes. Our setting requires a joint KL minimization over two families, and
therefore our proof techniques turn out quite different from their information-
and decision-theoretic ones.

\section{Anytime-valid testing under Optional Stopping and Optinal Continuation}
\label{app:av}
Consider the setting of Section~\ref{sec:intro-group-invariance}. Let $X = (X_n)_{n\in\nats}$ be a random
process, where each $X_n$ is an observation that takes values on a space
${\cal X}$. Let $(M_n)_{n\in \nats}$ be a sequence where, for each $n$,
$M_n = m_n(X^n)$ is a maximally invariant function for the action of $G$ on
${\cal X}^n$. 

Suppose that data
$X_1, X_2, \dots$ are gathered one by one. Here, a sequential test is a sequence
of zero-one-valued statistics $\xi = (\xi_n)_{n\in\nats}$ adapted to the natural
filtration generated by $X_1,X_2,\dots$. We consider the test defined by
$\xi_n = \indicator{T^{M_n} \geq 1 / \alpha}$ for some value $\alpha$. We note that Wald-style---Sequential Probability Ratio Tests---tests are different because they would output "no decision" until a particular sample size $n$. Afterwards, they would output $1$ ("reject the null") or $0$ ("there is no evidence to reject the null") forever. In contrast, in the present setting $\xi_n = 1$ means "if you stop now, for whatever reason, it is safe to reject the null".  Below  we prove the 
anytime validity of $\xi$. Additionally, we show that, for certain stopping
times $\tau\leq \infty$, the optionally stopped $\E$-statistic $T^{M_\tau}$
remains an $\E$-statistic. This fact validates the use of the stopped $T^{M_\tau}$ for
optional continuation because we can multiply the \E-statistics $T^{M_\tau}$ across studies while retaining type-I error control. This result is not new and we add it merely for completeness; it follows by standard arguments as \citet{ramdas2023savi} or \citetalias{grunwald_safe_2023}.  
\begin{proposition}\label{prop:main_sequential_result}
  Let $T^* = (T^{M_n})_{n\in\nats}$, where, for each $n$, $T^{M_n}$ is the
  likelihood ratio for the maximally invariant function $M_n = m_n(X^n)$ for the
  action of $G$ on ${\cal X}^n$. Let $\xi = (\xi_n)_{n\in\nats}$ be the
  sequential test given by $\xi_n = \indicator{T^{M_n}\geq 1/ \alpha}$. Then, the
  following two properties hold:
  \begin{enumerate}
  \item The sequential test $\xi$ is anytime valid at level $\alpha$, that is,
    \begin{equation*}
      \text{for any random time $N$, }
      \sup_{\theta_0\in\Theta_0}
      \mathbf{P}_{\theta_0}\bracks{\xi_N = 1} \leq
      \alpha.
    \end{equation*}\label{item:anytime_validity}
  \item Suppose that $\tau\leq \infty$ is a stopping time with respect to the filtration induced by $M = (M_n)_{n\in\nats}$. Then the optionally stopped $\E$-statistic
    $T^{M_\tau}$ is also an $\E$-statistic, that is,
    \begin{equation}\label{eq:optstopone}
      \sup_{\theta_0\in\Theta_0}\mathbf{E}_{\theta_0}^{\mathbf{P}}[T^{M_\tau}]\leq 1.
    \end{equation}\label{item:stopped_e_stat}
  \end{enumerate}
\end{proposition}
\commentout{The mechanism of the proof of this proposition---showing that
$T^* = (T^*_n)_{n\in\nats}$ is a nonnegative martingale with expected value
one---is, by now, standard; we perform it in Section \ref{sec:anyt-valid-test}.
The main ingredient, where invariance plays a role, pertains how the maximally
invariant statistic at step $n$ contains all information about the invariant
component of the data at previous steps. An inequality of
\citet{ville_etude_1939} and standard optional stopping theorems give the
desired results.}

It is natural to ask whether (\ref{eq:optstopone}) also holds
for stopping times that are adapted to the full data $(X^n)_{n\in\nats}$ but not to the reduced $(M_n)_{n\in\nats}$. In our t-test example, this
could be a stopping time $\tau^*$ such as ``$\tau^*:=1$ if
$|X_1| \not \in [a,b]$; $\tau^*=2$ otherwise'' for some $0 < a < b$. The answer
is negative: after proving Proposition~\ref{prop:main_sequential_result}, we show that, for
appropriate choice of $a$ and $b$, this $\tau^*$ is a counterexample. This
means that such nonadapted $\tau^*$ cannot be safely used under optional
continuation. However, using such a stopping time has no repercussions for optional stopping, since the time $N$ in part 1 of the proposition above is not even required to be a stopping time---$N$ is not restricted by the filtration induced by $M$ and it is even allowed to depend on future observations.

\commentout{
If, at each sample size $n$, the assumptions of
Corollary~\ref{cor:main_corollary} hold, we have shown that
\begin{equation}\label{eq:grow_sequential_estat}
  T^*_n(X^n) = \frac{q^{M_n}(m_n(X^n))}{p^{M_n}(m_n(X^n))},
\end{equation}
the likelihood ratio for the maximal invariant $M_n = m_n(X^n)$, defines a
sequence $T^* = (T^*_n)_{n\in\nats}$ of relatively GROW $\E$-statistics for
(\ref{eq:main_problem_group_parametrization}).

We will now use 
that $T^* = (T^*_n)_{n\in\nats}$ is a martingale as shown in Proposition~\ref{prop:grow_also_av}, 
to prove
Proposition~\ref{prop:main_sequential_result} from
Section~\ref{sec:main-results-intro}, the main result in this work pertaining
sequential testing. We end this section with the implications to the t-test.
}
\begin{proof}[Proof of Proposition~\ref{prop:main_sequential_result}]
  From Proposition~\ref{prop:grow_also_av}, we know that
  $T^* = (T^{M_n})_{n\in\nats}$ is a nonnegative martingale with expected value equal
  to one. Let $\xi = (\xi_n)_n$ be the sequential test given by
  $\xi_n = \indicator{T^{M_n} \geq 1 / \alpha}$. The anytime-validity at level
  $\alpha$ of $\xi$, is a consequence of Ville's inequality, and the fact that
  the distribution of each $T^{M_n}$ does not depend on $g$. Indeed, these two,
  together, imply that
  \begin{equation*}
    \sup_{g\in G}\mathbf{P}_g\{T^{M_n} \geq 1 / \alpha \text{ for some } n\in\nats
    \} \leq \alpha.
  \end{equation*}
  This implies the first statement. Now, let $\tau\leq \infty$ be a stopping
  time with respect to the filtration induced by $M$. 
  If the stopping time $\tau$ is almost surely
  bounded, $T^{M_\tau}$ is an $\E$-statistic by virtue of the optional stopping
  theorem. However, since $T^*$ is a nonnegative martingale, Doob's martingale
  convergence theorem implies the existence of an almost sure limit
  $T^*_\infty$. Even when $\tau$ might be infinite with positive probability,
  Theorem~4.8.4 of \citet{durrett_probability_2019} implies that $T^{M_\tau}$ is
  still an $\E$-statistic.
\end{proof}

\subsection{Importance of the filtration for randomly stopped E-Statistics}
\label{app:filtration_counterexample}
Consider the  t-test as in Example~\ref{ex:t-test}. Fix some $0 < a < b$, and
define the stopping time $\tau^* :=1$ if $|X_1| \not \in [a,b]$. $\tau^*=2$ otherwise.
Then $\tau^*$ is not adapted to (hence not a stopping time relative to)
$(M_n)_n$ as defined in that example, since $M_1 \in \{-1,1\}$ coarsens out all
information in $X_1$ except its sign. Now let $\delta_0 := 0$ (so that
${\cal H}_0$ represents the normal distributions with mean $\mu = 0$ and
arbitrary variance). Let $T^{*,\delta_1}_n(X^n)$ be equal to the GROW
$\E$-statistic $T^{M_n}(X^n)$ as in (\ref{eq:ttesthaar}); here we make explicit
its dependence on $\delta_1$. For ${\cal H}_1$, to simplify computations, we put
a prior $\tilde{\mathbf \Pi}^{\delta}_1$ on $\Delta_1 := {\mathbb R}$. We take
$\tilde{\mathbf \Pi}^{\delta}_1$ to be a normal distribution with mean $0$ and
variance $\kappa$. We can now apply Corollary~\ref{cor:delta_fixed_prior} (with
prior $\tilde{\mathbf \Pi}^{\delta}_0$ putting mass $1$ on
$\delta = \delta_0 = 0$), which gives that $\tilde{T}_n=\tilde{t}_n(X^n)$ is an
$\E$-statistic, where
$$
\tilde{t}_n(x^n)
= \int \frac{1}{\sqrt{2 \pi \kappa^2}} \exp\left(- \frac{\delta_1^2}{2\kappa^2} \right) \cdot T^{*,\delta_1}_n(x^n) \rmd \delta_1
$$
coincides with a standard type of Bayes factor used in Bayesian statistics.
By exchanging the integrals in the numerator, this expression can be calculated analytically.
The Bayes factor $\tilde{T}_1$ for $x^1 = x_1$ is found to be equal to $1$ for all $x_1 \neq 0$, and the Bayes factor for $(x_1,x_2)$ is given by:
$$
\tilde{T}_2= \frac{\sqrt{2 \kappa^2 +1} \cdot(x_1^2+x_2^2) }{\kappa^2(x_1-x_2)^2 + (x_1^2 + x_2^2)}.
$$
Now we consider the function
$$
f(x) := {\bf E}_{X_2 \sim N(0,1)} [\tilde{t}_2(x,X_2)].
$$
$f(x)$ is continuous and even.
We want to show that, with $\tau^*$ as above, $\tilde{T}_{\tau^*}$ is not an E-variable for some specific choices of $a,b$ and $\kappa$. Since, for any $\sigma > 0$, the null contains the  distribution under which the $X_i$ are i.i.d.\ $N(0,\sigma)$, the data may, under the null, in particular be sampled from $N(0,1)$. It thus  suffices to show that
$$
{\bf E}_{X_1,X_2 \sim N(0,1)}[\tilde{T}_{\tau^*}] = \mathbf{P}_{X_1 \sim
  N(0,1)}\{|X_1| \not \in [a,b]\} + {\bf E}_{X_1 \sim N(0,1)}[ {\bf 1}_{|X_1|
  \in [a,b]} f(X_1)] > 1.
$$
From numerical integration we find that $f(x)> 1$ on $[a,b]$ and $[-b,-a]$ if we take  $\kappa=200$, $a \approx 0.44$ and $b \approx 1.70$. The above expectation is then approximately equal to $1.19$, which shows that, even though $\tilde{T}_n$ is an $\E$-statistic at each $n$ by Corollary~\ref{cor:delta_fixed_prior} (it is even a GROW one), $\tilde{T}_{\tau^*}$ is not an $\E$-statistic (its expectation is $0.19$ too large), providing the claimed counterexample.

\section{Further Derivations, Computations and Proofs}
In this appendix, we prove the technical lemmas whose proof was omitted from the main text. In Section~\ref{app:proof_tech_lemmas}, we prove the lemmas used in the proof of Theorem~\ref{theo:main_theorem}. In Section~\ref{sec:deriv-likel-ratios}, we show the computations omitted from Section~\ref{sec:lower_triang}. 

\label{app:furtherproofs}
\subsection{Proof of technical lemmas \ref{lem:prior_sequence_existence},
  \ref{lem:first_term_bound}, and \ref{lem:second_term_bound} for Theorem~\ref{theo:main_theorem}}\label{app:proof_tech_lemmas}
\begin{proof}[Proof of Lemma~\ref{lem:prior_sequence_existence}]
  Let $\{\varepsilon_i\}_i$ be a sequence of positive numbers decreasing to
  zero. Let $\{K_i\}_{i\in\nats}$ and $\{L_i\}_{i\in\nats}$ be two arbitrary
  sequences of compact symmetric subsets that increase to cover $G$. Fix
  $i\in\nats$. The set $K_iL_i$ is compact and by our assumption there exists a
  sequence $\{J_l\}_{l\in\nats}$ and such that
  $\rho\{J_l\}/\rho\{J_lK_iL_i\}\to 1$ as $l\to\infty$. Pick $l(i)$ to be such
  that $\rho\{ J_{l(i)}\}/\rho\{J_{l(i)}K_iL_i\}\geq 1- \varepsilon_i$. The
  claim follows from a relabeling of the sequences.
\end{proof}

\begin{proof}[Proof of Lemma~\ref{lem:first_term_bound}]
  Let $h\in N$. Then we can write
    \begin{align*}
   &    \int \indicator{g \in NL}  \ q_{g}(h|m) \rmd\rho(g)
      =
        \int
        \indicator{g\in NL}  \ q_1(g^{-1}h|m)
        \rmd\rho(g)
      \\
      = & 
        \int \indicator{g \in (NL)^{-1}}  \ q_1(gh|m) \rmd\lambda(g)
      =
        \Delta(h^{-1})\int  \indicator{g\in (NL)^{-1}h}  \ q_1(g|m) \rmd\lambda(g)\\
      = &  \Delta(h^{-1})\mathbf{Q}_1\{H\in (NL)^{-1}h \ | \ M = m\}
    \end{align*}
    The same computation can be carried out for $p$. Consequently
  \begin{align*}
    \ln
    \frac{
    \int \indicator{g\in NL}  \ q_{g}(h|m) \rmd\rho(g)
    }{
    \int \indicator{g\in NL}  \ p_{g}(h|m) \rmd\rho(g)
    }
    & =
      \ln
      \frac{
      \mathbf{Q}_1\{H\in (NL)^{-1}h \ | \ M = m\}
      }{
      \mathbf{P}_1\{H\in (NL)^{-1}h \ | \ M = m\}
      }\\
    & \leq -\ln\mathbf{P}_1\{ H\in (NL)^{-1}h \ | \ M = m \}.
  \end{align*}
  By our assumption that $h\in N$, we have that
  $(NL)^{-1}h = L^{-1}N^{-1}h \supseteq L^{-1} = L$. This implies that the last
  quantity of the previous display is smaller than
  $-\ln\mathbf{P}_1\{H\in L \ | \ M = m \}$. The result follows.
\end{proof}

\begin{proof}[Proof of Lemma~\ref{lem:second_term_bound}]
  The result follows from a rewriting and an application of Jensen's inequality.
  Indeed,
  \begin{align*}
    -\ln\frac{\int p_g(h|m)\rmd \mathbf{\Pi}(g)}{\int q_g(h|m)\rmd
    \mathbf{\Pi}(g)}
    &=
      -\ln\frac{\int q_{g}(h|m)\frac{p_{g}(h|m)}{q_{g}(h|m)}\rmd \mathbf{\Pi}(g)}{\int q_{g}(h|m)\rmd
      \mathbf{\Pi}(g)}
    =
      -\ln\int \frac{p_{g}(h|m)}{q_{g}(h|m)} \rmd \mathbf{\Pi}(g | h,m)\\
    &\leq
      -\int \ln\frac{p_{g}(h|m)}{q_{g}(h|m)} \rmd \mathbf{\Pi}(g | h,m) = 
      \int \ln\frac{q_{g}(h|m)}{p_{g}(h|m)} \rmd \mathbf{\Pi}(g | h,m),
  \end{align*}
  as it was to be shown.
\end{proof}
\subsection{Derivation and Computation for Section~\ref{sec:lower_triang}}
\label{sec:deriv-likel-ratios}
We now provide Proposition~\ref{prop:triangular_maxinv_distr}, giving the derivation underlying Lemma~\ref{lem:submit} in the main text about the likelihood ratio
$T^*_{{\cal S},n}$ for $\delta_0=0$, followed by details about numerical computation.
\begin{proposition}\label{prop:triangular_maxinv_distr}
  Let $X\sim N(\gamma, I)$, and let $m S \sim W(m, I)$ be independent random
  variables. Let $LL' = S$ be the Cholesky decomposition of $S$, and let
  $M = \frac{1}{\sqrt{m}}L^{-1}X$. If $\mathbf{P}_{0,n}$ is the probability distribution
  under which $X\sim N(0, I)$, then, the likelihood
  $ p_{\gamma, m}^M / p_{0, m}^M$ ratio is given by
  \begin{equation*}
    \frac{p_{\gamma,m}^M(M)}{p_{0,m}^M(M)}
    =
    \rme^{-\frac{1}{2}\norm{\gamma}^2}
    \int
    \rme^{\inner{\gamma}{T A^{-1}M}}
    \rmd\mathbf{P}_{m+1, I}(T)
  \end{equation*}
  where $A\in{\cal L}^+$ is the Cholesky factor
  $AA' = I + MM'$, and $\mathbf{P}_{m+1, I}^T$ is the probability
  distribution on ${\cal L}^+$ such that $TT'\sim W(m + 1, I)$.
\end{proposition}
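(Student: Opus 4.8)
The plan is to rewrite the target likelihood ratio as a conditional expectation under the ``central'' model $\mathbf{P}_{0,m}$ (the joint law of $(X,S)$ with $X \sim N(0,I)$) and then evaluate that expectation by exploiting an independence property. Since $mS \sim W(m,I)$ has the same law under both hypotheses and is independent of $X$, the density ratio of the full data is $\frac{\rmd \mathbf{P}_{\gamma,m}}{\rmd \mathbf{P}_{0,m}}(X,S) = \rme^{\inner{\gamma}{X} - \frac12\norm{\gamma}^2}$. Because $M$ is a statistic and $\mathbf{P}^M_{\gamma,m} \ll \mathbf{P}^M_{0,m}$, the Radon--Nikodym derivative of the pushforward measures equals the $\mathbf{P}_{0,m}$-conditional expectation of the joint one given $M$, so
\[
  \frac{p^M_{\gamma,m}(M)}{p^M_{0,m}(M)}
  = \mathbf{P}_{0,m}\Bigl[ \tfrac{\rmd \mathbf{P}_{\gamma,m}}{\rmd \mathbf{P}_{0,m}}(X,S) \Bigm| M \Bigr]
  = \rme^{-\frac12\norm{\gamma}^2}\, \mathbf{P}_{0,m}\bigl[ \rme^{\inner{\gamma}{X}} \bigm| M \bigr],
\]
and it remains to compute the conditional Laplace transform $\mathbf{P}_{0,m}\bigl[\rme^{\inner{\gamma}{X}} \bigm| M\bigr]$.

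Next I would record the algebraic identities. Writing $Q = \sqrt{m}\,L$ for the lower-triangular, positive-diagonal Cholesky factor of $mS$, one has $M = Q^{-1}X$, hence $X = QM$. Let $A = A(M)$ be the Cholesky factor of $I + MM'$ and set $T := QA$; then $TT' = Q(I + MM')Q' = mS + XX'$, so $T$ is precisely the Cholesky factor of $mS + XX'$, and under $\mathbf{P}_{0,m}$ we have $TT' \sim W(m+1,I)$ because $mS + XX'$ is a sum of the independent $W(m,I)$ and $W(1,I) = XX'$. Moreover $X = QM = T A(M)^{-1} M$, where $A(M)$ is a deterministic function of $M$, so $\mathbf{P}_{0,m}\bigl[\rme^{\inner{\gamma}{X}}\bigm| M\bigr] = \mathbf{P}_{0,m}\bigl[\rme^{\inner{\gamma}{T A(M)^{-1}M}} \bigm| M\bigr]$.

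The crux, which I expect to be the main obstacle, is to prove that $M$ and $T$ are independent under $\mathbf{P}_{0,m}$; granting this, the conditional expectation collapses to an unconditional one over $T$ alone, $\mathbf{P}_{0,m}\bigl[\rme^{\inner{\gamma}{T A(M)^{-1}M}}\bigm| M\bigr] = \mathbf{P}^T_{m+1,I}\bigl[\rme^{\inner{\gamma}{T A^{-1}M}}\bigr]$, which together with the display above is exactly the asserted formula. For the independence I would invoke the Bartlett (``matrix Cholesky'') decomposition of a Gaussian matrix: write $mS = ZZ'$ with $Z$ a $d \times m$ matrix of i.i.d.\ $N(0,1)$ entries independent of $X$, so that $\tilde Z := [\,Z \mid X\,]$ is a $d \times (m+1)$ matrix of i.i.d.\ $N(0,1)$ entries with $\tilde Z \tilde Z' = mS + XX' = TT'$; then $\tilde Z = T\Psi$ with $\Psi := T^{-1}\tilde Z$ uniformly (Haar) distributed on the manifold of $d \times (m+1)$ matrices with orthonormal rows and independent of $T$. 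It then remains to check that $M$ is a measurable function of $\Psi$ alone: partitioning $\Psi = [\,\Psi_1 \mid \psi\,]$ conformally with $\tilde Z$ gives $Z = T\Psi_1$ and $X = T\psi$; since $\Psi\Psi' = I_d$ we have $\Psi_1\Psi_1' = I_d - \psi\psi'$, and writing $\Psi_1\Psi_1' = CC'$ for its Cholesky factor $C = C(\psi)$ yields $mS = (TC)(TC)'$, so $Q = TC$ and $M = Q^{-1}X = C^{-1}\psi$, a function of $\Psi$; hence $M \perp T$.

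The remaining points are routine but need care: the almost-sure invertibility of all the Cholesky factors, which holds once $m \ge d$ (so that $mS$ and $mS + XX'$ are a.s.\ positive definite) together with $\norm{\psi} < 1$ a.s.\ (so that $I_d - \psi\psi'$ is a.s.\ positive definite, using that $\Psi'\Psi$ is a rank-$d$ projection with $d < m+1$); a precise citation for the Bartlett-decomposition independence, which is classical; and the measure-theoretic justification of the conditional-expectation identity used in the first paragraph.
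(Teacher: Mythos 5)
Your proposal is correct, but it proves the identity by a genuinely different route than the paper. The paper computes everything at the level of densities: it writes down the joint density of $(X,T)$ with $T$ the Cholesky factor of $mS$, performs the changes of variables $W\mapsto T$ and $(X,T)\mapsto(M,T)$ with explicit Jacobians, recognizes $\prod_i t_{ii}^{-i}\,\rmd T$ as a left Haar measure on the lower-triangular group, marginalizes $T$ out using left invariance, and only then completes the square via the Cholesky factor $A$ of $I+MM'$ and the substitution $T\mapsto TA^{-1}$ to identify the remaining integral as a normalized $W(m+1,I)$-Cholesky expectation. You instead use the identity $p^M_{\gamma}/p^M_{0}=\mathbf{P}_{0}\bigl[\rmd\mathbf{P}_{\gamma}/\rmd\mathbf{P}_{0}\mid M\bigr]=\rme^{-\norm{\gamma}^2/2}\,\mathbf{P}_{0}\bigl[\rme^{\inner{\gamma}{X}}\mid M\bigr]$, observe that $X=TA(M)^{-1}M$ where $T$ is the Cholesky factor of $mS+XX'\sim W(m+1,I)$, and reduce everything to the independence of $M$ and $T$, which you obtain from the Bartlett/QR decomposition of the $d\times(m+1)$ Gaussian matrix $[\,Z\mid X\,]$. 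Your argument is cleaner in that it avoids all Jacobian and normalization-constant bookkeeping and makes transparent \emph{why} the $W(m+1,I)$ law appears (it is the Wishart of the completed sum $mS+XX'$); the identity $Q=TC$ with $C$ the Cholesky factor of $I-\psi\psi'$, which shows $M=C^{-1}\psi$ is a function of the Haar-distributed orthogonal part alone, is exactly the right observation and checks out. The paper's computation, by contrast, also delivers the explicit marginal densities $p^M_{\gamma,\Lambda}$ themselves (used nowhere else, but of independent interest) and exhibits the $\Lambda$-invariance directly. Two points in your writeup deserve a little more care than you give them: the a.s.\ strict inequality $\norm{\psi}<1$ is most directly seen from $\norm{\psi}^2=X'(mS+XX')^{-1}X=\frac{X'(mS)^{-1}X}{1+X'(mS)^{-1}X}<1$ (the rank argument you sketch only gives $\norm{\psi}\le 1$), and the whole construction requires $m\ge d$ for $mS$ to be a.s.\ nonsingular — a hypothesis the proposition leaves implicit but which you correctly flag.
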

\begin{proof}
  Let $\Sigma = \Lambda\Lambda'$ be the Cholesky decomposition of $\Sigma$. The
  density $p_{\gamma,\Lambda}^{X}$ of $X$ with respect to the Lebesgue measure on
  $\reals^d$ is
  \begin{equation*}
    p_{\gamma, \Lambda}^{X}(X)
    =
    \frac{1}{(2\pi)^{d/2}\det(\Lambda)}
    \etr\paren{
      - \frac{1}{2}(\Lambda^{-1}X - \gamma)(\Lambda^{-1}X-\gamma)'
    },
  \end{equation*}
  where, for a square matrix $A$, we define $\etr(A)$ to be the exponential of
  the trace of $A$. Let $W = mS$. Then, the density $p^{W}_{\gamma, \Lambda}$ of
  $W$ with respect to the Lebesgue measure on $\reals^{d(d-1)/2}$ is
  \begin{equation*}
    p^{W}_{\gamma, \Lambda}(W)
    =
    \frac{1}{2^{md/2}\Gamma_d(n/2)\det(\Lambda)^{m}}
    \det(S)^{(m - d - 1)/2}\etr\paren{-\frac{1}{2}(\Lambda\Lambda')^{-1}W}.
  \end{equation*}
  Now, let $W = TT'$ be the Cholesky decomposition of $W$. We seek to compute
  the distribution of the random lower lower triangular matrix $T$. To this end,
  the change of variables $W\mapsto T$ is one-to-one, and has Jacobian
  determinant equal to $2^d\prod_{i = 1}^dt_{ii}^{d-i+1}$. Consequently, the
  density $p^T_{\gamma, \Lambda}(T)$ of $T$ with respect to the Lebesgue measure is
  \begin{equation*}
    p^T_{\gamma, \Lambda}(T)
    =
    \frac{2^d}{2^{md/2}\Gamma_d(m/2)}
    \det(\Lambda^{-1}T)^m
    \etr\paren{-\frac{1}{2}(\Lambda^{-1}T)(\Lambda^{-1}T)'}
    \prod_{i = 1}^dt_{ii}^{-i}.
  \end{equation*}
  We recognize $\rmd\nu(T) = \prod_{i = 1}^dt_{ii}^{-i}\rmd T$ to be a left Haar
  measure on ${\cal L}_+$, and consequently
  \begin{equation*}
    \tilde{p}^T_{\gamma, \Lambda}(T)
    =
    \frac{2^d}{2^{md/2}\Gamma_d(m/2)}
    \det(\Lambda^{-1}T)^m
    \etr\paren{-\frac{1}{2}(\Lambda^{-1}T)(\Lambda^{-1}T)'}
  \end{equation*}
  is the density of $T$ with respect to $\rmd \nu(T)$. After these rewritings,
  The density $\tilde{p}_{\gamma, \Lambda}^{X,T}(X,T)$ of the pair $(X,T)$ with
  respect to $\rmd X\times \rmd \nu(T)$ is given by
  \begin{equation*}
    \tilde{p}_{\gamma, \Lambda}^{X,T}(X,T)
    =
    \frac{2^d}{K}
    \frac{\det(\Lambda^{-1}T)^{m}}{\det(\Lambda)}
    \etr\paren{-\frac{1}{2}(\Lambda^{-1}T)(\Lambda^{-1}T)'
      - \frac{1}{2}(\Lambda^{-1}X - \gamma)(\Lambda^{-1}X-\gamma)'}
  \end{equation*}
  with $K = (2\pi)^{d/2}2^{md/2}\Gamma_d(n/2)$. The change of variables
  $(X,T)\mapsto (T^{-1}X, T)$ has Jacobian determinant equal to $\det(T)$. If
  $M = T^{-1}X$, then, the density $\tilde{p}^{M,T}_{\gamma,\Lambda}$ of
  $(M, T)$ with respect to $\rmd M\times \rmd\nu(T)$ is given by
  \begin{equation*}
    \tilde{p}^{M,T}_{\gamma,\Lambda}(M,T)
    =
    \frac{\det(\Lambda^{-1}T)^{m + 1}}{K''}
    \etr\paren{-\frac{1}{2}(\Lambda^{-1}T)(\Lambda^{-1}T)'
      - \frac{1}{2}(\Lambda^{-1}TM - \gamma)(\Lambda^{-1}TM-\gamma)'}.
  \end{equation*}
  We now marginalize $T$ to obtain the distribution of the maximal invariant
  $M$. Since the integral is with respect to the left Haar measure $\rmd
  \nu(T)$, we have that
  \begin{equation*}
    \int_{T\in{\cal L}^+}\tilde{p}^{M,T}_{\gamma,\Lambda}(M,T) \rmd\nu(T)
    =
    \int_{T\in{\cal L}^+}\tilde{p}^{M,T}_{\gamma,I}(M,\Lambda^{-1}T) \rmd\nu(T)
    =
    \int_{T\in{\cal L}^+}\tilde{p}^{M,T}_{\gamma,I}(M,T) \rmd\nu(T),
  \end{equation*}
  and consequently,
  \begin{align*}
    p_{\gamma,\Lambda}^M(M)
    &=
    \frac{2^d}{K}
    \int_{T\in{\cal L}^+}
    \det(T)^{m+1}
    \etr\paren{-\frac{1}{2}TT'
      - \frac{1}{2}(TM - \gamma)(TM - \gamma)'}
      \rmd\nu(T)\\
    &=
      \frac{2^d}{K}\rme^{-\frac{1}{2}\norm{\gamma}^2}
      \int_{T\in{\cal L}^+}
      \det(T)^{m+1}
      \etr\paren{-\frac{1}{2}T(I + MM')T'
      + \gamma(TM)'}
      \rmd\nu(T).
  \end{align*}
  The matrix $I + MM'$ is positive definite and symmetric. It is then possible to
  perform its Cholesky decomposition  $(I + MM') = AA'$. With this at hand, the
  previous display can be written as
  \begin{equation*}
    p_{\gamma,\Lambda}^M(M)
    =
    \frac{\rme^{-\frac{1}{2}\norm{\gamma}^2}}{K}
    \int_{T\in{\cal L}^+}
    \det(T)^{m+1}
    \etr\paren{-\frac{1}{2}(TA)(TA)'
      + \gamma(TM)'}
    \rmd\nu(T).
  \end{equation*}
  We now perform the change of variable $T\mapsto TA^{-1}$. To this end, notice that
  $\rmd\nu(A^{-1}) = \rmd\nu(T)\prod_{i=1}^da_{ii}^{-(d-2i+1)}$, and consequently
  \begin{align*}
    p_{\gamma,\Lambda}^M(M)
    &=
    \frac{2^d}{K}
    \frac{\rme^{-\frac{1}{2}\norm{\gamma}^2}\prod_{i=1}^da_{ii}^{2i}}{\det(A)^{m
      +d + 2}}
    \int_{T\in{\cal L}^+}
    \det(T)^{m+1}
    \etr\paren{-\frac{1}{2}TT'
      + \gamma(TA^{-1}M)'}
      \rmd\nu(T)\\
    &=
      \frac{\Gamma_d\paren{\frac{m+1}{2}}}{\pi^{d/2}\Gamma_d\paren{\frac{m}{2}}}
      \frac{\prod_{i=1}^da_{ii}^{2i}}{\det(A)^{m
      +d + 2}}
      \rme^{-\frac{1}{2}\norm{\gamma}^2}
      \mathbf{P}^T_{m+1}\sqbrack{\rme^{\inner{\gamma}{TA^{-1}M}}},
  \end{align*}
  so that that at $\gamma = 0$ the density $p_{0,\Lambda}^M(M)$ takes the form
  \begin{equation*}
    p_{0,\Lambda}^M(M)
    =
    \frac{\Gamma_d\paren{\frac{m+1}{2}}}{\pi^{d/2}\Gamma_d\paren{\frac{m}{2}}}
    \frac{\prod_{i=1}^da_{ii}^{2i}}{\det(A)^{m
        +d + 2}},
  \end{equation*}
  and consequently the likelihood ratio is
  \begin{align*}
    \frac{p_{\gamma,\Lambda}^M(M)}{p_{0,\Lambda}^M(M)}
    =
    \rme^{-\frac{1}{2}\norm{\gamma}^2}
    \int \rme^{\inner{\gamma}{TA^{-1}M}} \rmd\mathbf{P}_{m+1}(T).
  \end{align*}
\end{proof}

\begin{remark}[Numerical computation]
  Computing the optimal $\E$-statistic is feasible numerically. We are interested in
  computing
  \begin{equation*}
    \int \rme^{\inner{x}{T y}} \rmd\mathbf{P}_{m + 1}(T),
  \end{equation*}
  where $T$ is a ${\cal L}^+$-valued random lower triangular matrix such that
  $TT'\sim W(m + 1, I)$, and $x,y \in \reals^d$. Define, for $i\geq j$, the numbers
  $a_{ij} = x_iy_j$. Then $\inner{x}{Ty} = \sum_{i\geq j}a_{ij}T_{ij}$. By
  Bartlett's decomposition, the entries of the matrix $T$ are independent and
  $T_{ii}^2\sim \chi^2((m + 1) - i + 1)$, and $T_{ij}\sim N(0,1)$ for $i>j$. Hence,
  our target quantity satisfies
  \begin{equation*}
    \int [\rme^{\inner{x}{Ty}}] \mathbf{P}_{m + 1}(T)
    =
    \int \rme^{\sum_{i\geq j}a_{ij}T_{ij}} \rmd\mathbf{P}_{m + 1}(T)
    =
    \int  \prod_{i\geq j}\rme^{a_{ij}T_{ij}} \rmd\mathbf{P}_{m + 1}(T).
  \end{equation*}
  On the one hand, for the off-diagonal elements satisfy, using the expression
  for the moment generating function of a standard normal random variable,
  \begin{equation*}
    \mathbf{E}^{\mathbf{P}}_{m + 1}[\rme^{a_{ij} T_{ij}}]
    =\exp\paren{\frac{1}{2}a_{ij}^2}.
  \end{equation*}
  For the diagonal elements the situation is not as simple, but a numerical
  solution is possible. Indeed, for $a_{ii} \geq 0$, and $k_i = (m + 1) - i + 1$
  \begin{align*}
    \mathbf{E}^{\mathbf{P}}_{m}[\rme^{a_{ii} T_{ii}}]
    &=\frac{1}{2^{\frac{k_i}{2}}\Gamma\paren{\frac{k_i}{2}}}
    \int_0^\infty x^{\frac{k_i}{2} - 1}\exp\paren{-\frac{1}{2}x +
      a_{ii}\sqrt{x}}\rmd x\\
    &=
      {}_1F_1\paren{\frac{k_i}{2}, \frac{1}{2}, \frac{a_{ii}^2}{2}}
      +
      \frac{
      \sqrt{2}a_{ii}\Gamma\paren{\frac{k_i + 1}{2}}
      }{
      \Gamma\paren{\frac{k_i}{2}}
      }
      {}_1F_1\paren{\frac{k_i + 1}{2}, \frac{3}{2}, \frac{a_{ii}^2}{2}},
  \end{align*}
  where ${}_1F_1(a,b, z)$ is the Kummer confluent hypergeometric function. For
  $a_{ii}<0$,
  \begin{equation*}
   \frac{1}{2^{k_i/2}\Gamma\paren{\frac{k_i}{2}}} \int_0^\infty
    x^{k_i/2 - 1}\exp\paren{-\frac{1}{2}x + a_{ii}\sqrt{x}}\rmd x
    =
    \frac{\Gamma\paren{k_i}}{2^{k_i - 1} \Gamma\paren{\frac{k_i}{2}}}
    U\paren{\frac{k_i}{2}, \frac{1}{2}, \frac{a_{ii}^2}{2}},
  \end{equation*}
  and $U$ is Kummer's U function.
\end{remark}
\end{document}